\newcommand{\R}{\mathbb{R}}
\newcommand{\C}{\mathbb{C}}
\newcommand{\N}{\mathbb{N}}
\newcommand{\from}{\colon}
\DeclareMathOperator{\Lip}{Lip}
\DeclareMathOperator{\id}{id}
\DeclareMathOperator{\supp}{supp}
\DeclareMathOperator{\sgn}{sgn}
\newcommand{\ud}{\,\mathrm{d}}
\newcommand{\Hom}{\mathrm{Hom}}
\DeclareMathOperator*{\ulim}{\overline{\lim}}
\DeclareMathOperator{\clos}{clos}
\DeclareMathOperator{\im}{im}
\newtheorem{thm}{Theorem}[section]
\newtheorem{theorem}[thm]{Theorem}
\newtheorem{lemma}[thm]{Lemma}
\newtheorem{prop}[thm]{Proposition}
\newtheorem{cor}[thm]{Corollary}
\newtheorem{defn}[thm]{Definition}
\theoremstyle{remark}
\newtheorem{remark}[thm]{Remark}
\title{Ergodic maps and the cohomology of nilpotent Lie groups}
\author{Gioacchino Antonelli}
\address[Gioacchino Antonelli]
{New York University, Courant Institute of Mathematical Sciences, 251 Mercer Street, 10012, New York, USA}
\email{ga2434@nyu.edu}
\author{Robert Young}
\address[Robert Young]
{New York University, Courant Institute of Mathematical Sciences, 251 Mercer Street, 10012, New York, USA}
\email{ryoung@cims.nyu.edu}
\begin{document}

\begin{abstract}
  In this paper, we study how the cohomology of nilpotent groups is affected by Lipschitz maps. We show that, given a smooth Lipschitz map $f$ between two simply-connected nilpotent Lie groups $G$ and $H$, there is a map $\psi$ that induces an ergodic measure on the space of functions from $G$ to $H$. We call such maps \emph{ergodic maps}.

  We show that when $\psi$ is an ergodic map, the pullback $\psi^*\omega$ of a differential form $\omega$ admits a well-defined \textit{amenable average} $\overline{\psi^{*}}\omega$, and $\overline{\psi^*}$ is a homomorphism of cohomology algebras. In the case that $f$ is a quasi-isometry, the ergodic map $\psi$ is also a quasi-isometry, and $\overline{\psi^*}$ is an isomorphism. This lets us generalize and provide a simplified, self-contained proof of the theorem due to Shalom, Sauer, and Gotfredsen--Kyed that quasi-isometric nilpotent groups have isomorphic cohomology algebras.
\end{abstract}

\maketitle

\section{Introduction}

The question of when two nilpotent groups are quasi-isometric has motivated many developments in geometric group theory. Since any finitely generated nilpotent group is a cocompact lattice in a nilpotent Lie group, it suffices to consider nilpotent Lie groups. Clearly, two isomorphic nilpotent Lie groups are quasi-isometric, but it remains open whether two quasi-isometric groups are isomorphic.

One of the first results in this direction was Pansu's theorem \cite{PansuCroissance, Pansu}, which says that the asymptotic cone of a nilpotent group is isomorphic to its associated graded group, and that two quasi-isometric nilpotent groups must have isomorphic asymptotic cones. Shalom \cite{Shalom} was the first to exhibit two nilpotent groups with the same associated graded group which are not quasi-isometric, by showing that the Betti numbers of finitely-generated nilpotent groups are invariant under quasi-isometry. Sauer \cite{Sauer} extended this result, showing that the cup product structure is likewise preserved under quasi-isometry, and Gotfredsen and Kyed extended this to arbitrary nilpotent Lie groups \cite{GotfredsenKyed}.
More recently, Llosa Isenrich, Pallier, and Tessera \cite{LlosaIsenrichPallierTessera} gave an example of two groups with isomorphic asymptotic cones which can be distinguished using their Dehn functions.

Shalom and Sauer's results are particularly promising because they point to a possible strategy for solving the conjecture. The cup product structure is the first of a series of higher cohomology operations that operate on two, three, or more cohomology classes at a time. If these higher operations are likewise invariant under quasi-isometry, in the sense that a quasi-isometry from $G$ to $H$ induces a $C_\infty$--morphism from $H^*(H)$ to $H^*(G)$, that would imply that the minimal models of $H^*(H)$ and $H^*(G)$ are isomorphic \cite[Theorem 9.1]{KadeCohom}, and in turn, that $G$ and $H$ are isomorphic.

In this paper, we generalize the result of Shalom, Sauer, and Gotfredsen--Kyed to maps that are not quasi-isometries and give a new, more geometric proof of their results based on pullbacks of differential forms. Before stating our main theorems, we introduce some notation. Let $\phi\from G \to H$ be a Lipschitz map between two amenable connected Riemannian Lie groups, and let $\omega$ be a left-invariant differential form with complex coefficients. Then the pullback $\phi^*\omega$ is a (possibly non-smooth) bounded differential form on $G$, and since $G$ is amenable, one can average $\phi^*\omega$ to obtain a left-invariant differential form.

That is, let $L^\infty(G)$ be the set of complex-valued $L^\infty$ functions on $G$, let $\mathfrak{g}$ be the Lie algebra of $G$, and let $M\from L^\infty(G)\to \C$ be a left-invariant mean. For every left-invariant $d$--vector field $\lambda \in \wedge^d \mathfrak{g}$ on $G$, we can evaluate $\phi^*\omega$ on $\lambda$ to obtain a bounded function $\phi^*\omega(\lambda)$ on $G$, and there is a unique left-invariant form $\overline{\phi^*}\omega \in C^d(\mathfrak{g})$ such that 
\begin{equation}\label{eqn:DefinedAs}
  \overline{\phi^*}\omega(\lambda) := M(\phi^*\omega(\lambda)), \qquad \text{ for all }\lambda\in \wedge^d \mathfrak{g}.
\end{equation}
We denote this by $\overline{\phi^*}\omega$ because it is the result of averaging the pullback $\phi^*\omega$.

It is hard to characterize $\overline{\phi^*}$ in the general case, because a Lipschitz map from $G$ to $H$ can have different behavior at different locations and scales. In this paper, however, we will show two main results. First, if $\phi$ is an ergodic map (i.e., it satisfies an ergodicity condition that we will define below), then $\overline{\phi^*}$ is a homomorphism of cohomology algebras. Second, for any map $f$ from $G$ to $H$, there is a limit of translates of $f$ which is an ergodic map. In particular, if $f$ is a quasi-isometry from $G$ to $H$, then there is an ergodic quasi-isometry $\phi$, and $\overline{\phi^*}$ is an isomorphism. This lets us recover the theorems of Shalom, Sauer, and Gotfredsen--Kyed and extend their results from quasi-isometries to the broader class of Lipschitz maps between nilpotent groups.

Before we state our results, we first define ergodic maps (see Section~\ref{sec:ergodic} for full details). Let $Y_0$ be the space of smooth maps $f$ from $G$ to $H$ such that all derivatives of $f$ are bounded, and $f(\id_G)=\id_H$. We can define a right action of $G$ on $Y_0$ by letting $(f\cdot g)(x) := f(g)^{-1} f(g\cdot x)$. Since $(f\cdot g)(\id_G)=\id_H$ and $f\cdot g$ has bounded derivatives, the closure of the orbit $f\cdot G$ is compact by the Arzelà--Ascoli theorem. 

For any $n>0$, let $B_n\subset G$ be the ball of radius $n$ centered at $\id_G$. Given $f\in Y_0$, there is a measure $\mu_n$ on $Y_0$ such that for any continuous function with compact support $A \from Y_0\to \R$,
$$
\int A \ud \mu_n := \fint_{B_n} A(f\cdot g)\ud g,
$$
i.e., $\int A \ud \mu_n$ is the expected value of $A(f\cdot g)$ for $g$ drawn uniformly from $B_n$, see Section~\ref{sec:bounded-derivs} for notation.
We say that $f$ is an \emph{ergodic map} if the measures $\mu_n$ converge weakly to some limit measure $\mu$, and $\mu$ is an ergodic measure on $Y_0$ with respect to the right action of $G$ defined above. 

For example, the map $f_1\from \R\to \R^2$, $f_1(x) = (x, \sin x)$ is ergodic, with limit measure $\mu$ supported on maps of the form 
$$
h_\theta(x) = (x, \sin(x+\theta)-\sin \theta),
$$
but $f_2(x) = (x, |x|)$ is not; its limit measure is a two-point measure supported on $x\mapsto (x, \pm x)$.

Our main result states that any ergodic map induces an algebra homomorphism between the real cohomology algebras $H^*(\mathfrak{h};\mathbb R)$ and $H^*(\mathfrak{g};\mathbb R)$. 

\begin{thm}\label{thm:induced-map}
  Let \(G\) and \(H\) be simply connected nilpotent Lie groups, and let $\psi\in Y_0$ be an ergodic map. For any $d\geq 0$, any real-valued left-invariant form $\omega\in C^d(\mathfrak{h};\mathbb R)$, and any left-invariant $d$--vector field $\lambda\in \wedge^d\mathfrak{g}$, let
  \begin{equation}\label{eqn:ContinuousInvariant}
  \overline{\psi^*}\omega(\lambda) := \lim_{n\to \infty} \fint_{B_n} \psi^*\omega(\lambda) \ud g.
  \end{equation}
  Then this limit exists, and $\overline{\psi^*}$ induces a homomorphism $\overline{\psi^*}\from H^*(\mathfrak{h};\mathbb R)\to H^*(\mathfrak{g};\mathbb R)$ of cohomology algebras over $\mathbb R$. If $\psi$ is a quasi-isometry, then $\overline{\psi^*}$ is an isomorphism.
\end{thm}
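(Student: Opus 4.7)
By left-invariance of both $\omega$ and $\lambda$ and the definition of the action $f\cdot g$, one checks that $\psi^{*}\omega(\lambda)(g) = A(\psi\cdot g)$, where $A\from Y_0\to \C$ is the continuous (in the $C^1$-topology on $Y_0$) function given by $A(f) := f^{*}\omega(\lambda)|_{\id_G}$. Hence the average in \eqref{eqn:ContinuousInvariant} equals $\int A\ud\mu_n$, and since $A$ is bounded on the compact orbit closure $\overline{\psi\cdot G}$, the weak convergence $\mu_n\to\mu$ from the definition of an ergodic map yields the limit $\int A\ud\mu$. Linearity in $\lambda$ then yields a well-defined left-invariant form $\overline{\psi^{*}}\omega \in C^d(\mathfrak{g})$.

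To see that $\overline{\psi^{*}}$ is a chain map, note that $\psi^{*}d = d\psi^{*}$ holds pointwise because $\psi$ is smooth, so it suffices to establish $\overline{d\eta} = d\,\overline{\eta}$ for bounded smooth forms $\eta$ with bounded derivatives. Cartan's formula applied to left-invariant fields $X_0,\dots,X_d$ decomposes $d\eta(X_0,\dots,X_d)$ into Lie-derivative terms $X_i\cdot\eta(X_0,\dots,\widehat{X_i},\dots,X_d)$ and algebraic bracket terms $\eta([X_i,X_j],\dots)$. Averaging over $B_n$, the bracket terms tend to $d\,\overline{\eta}(X_0,\dots,X_d)$ by the existence step above (applied to suitable multivectors), while the Lie-derivative terms reduce via Stokes' theorem to integrals over $\partial B_n$ of magnitude $O(\|\eta\|_{\infty}|\partial B_n|)$, which is $o(|B_n|)$ because balls form a F\o lner sequence in a simply-connected nilpotent Lie group.

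The main obstacle is multiplicativity on cohomology. At the cochain level $\psi^{*}(\omega_1\wedge\omega_2) = \psi^{*}\omega_1\wedge\psi^{*}\omega_2$, but averaging is not multiplicative: writing $A_{\omega_i}(f) := (f^{*}\omega_i)|_{\id_G}\in\wedge^{d_i}\mathfrak g^{*}$, the discrepancy $\overline{\psi^{*}}(\omega_1\wedge\omega_2) - \overline{\psi^{*}}\omega_1\wedge\overline{\psi^{*}}\omega_2$ equals the $\mu$-covariance of the vector-valued random forms $A_{\omega_1}$ and $A_{\omega_2}$. The goal is to show that for closed $\omega_i$ this covariance is a coboundary in $C^{*}(\mathfrak g)$. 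My plan is a quantitative Poincar\'e lemma: find primitives $\gamma_i$ for $\psi^{*}\omega_i - \overline{\psi^{*}}\omega_i$ whose $L^{\infty}$ norm on $B_n$ is negligible against $|B_n|/|\partial B_n|$, and then expand $\psi^{*}\omega_1\wedge\psi^{*}\omega_2 - \overline{\psi^{*}}\omega_1\wedge\overline{\psi^{*}}\omega_2 = d\theta$ with $\theta$ of the same controlled growth, so that a Stokes/F\o lner argument tailored to $\theta$ produces a left-invariant coboundary. Constructing $\gamma_i$ coherently---by patching local primitives using the ergodicity of $\mu$ under the $G$-action, via a Birkhoff/subadditive-type argument---is the principal difficulty.

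In the quasi-isometric case, a smoothing of a coarse inverse of $\psi$ is likewise an ergodic map $\psi'\from H\to G$ (by the ergodic-maps construction the paper establishes independently). The compositions $\psi\circ\psi'$ and $\psi'\circ\psi$ lie within bounded distance of the identities of $H$ and $G$; combined with the quantitative Poincar\'e lemma above, this shows that maps at bounded distance induce the same map on cohomology. Functoriality of $\overline{(\cdot)^{*}}$ on $H^{*}$ then yields $\overline{(\psi')^{*}}\circ\overline{\psi^{*}} = \id$ and $\overline{\psi^{*}}\circ\overline{(\psi')^{*}} = \id$ on cohomology, so $\overline{\psi^{*}}$ is an isomorphism.
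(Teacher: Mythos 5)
Your first two steps are sound and essentially match the paper: writing $\psi^*\omega(\lambda)(g)=A(\psi\cdot g)$ for a continuous $A$ on $Y_0$ and invoking weak convergence $\mu_n\to\mu$ on the compact orbit closure is exactly how the paper proves existence of the limit (Lemma~\ref{lem:link-amenable}), and your F\o lner/Stokes treatment of the Lie-derivative terms in Cartan's formula is a legitimate substitute for the paper's observation that $M$ is a $G$--module homomorphism. The two remaining steps, however, contain genuine gaps. For multiplicativity you correctly isolate the covariance term, but your proposed ``quantitative Poincar\'e lemma'' --- primitives $\gamma_i$ of $\psi^*\omega_i-\overline{\psi^*}\omega_i$ with $L^\infty$ norm on $B_n$ negligible against $|B_n|/|\partial B_n|$ --- is a much stronger statement than anything available, and you give no mechanism for producing it; there is no reason a mean-zero bounded cocycle should admit a primitive of sublinear growth. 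The paper's route is fundamentally different and weaker in exactly the right way: ergodicity of $\mu$ makes $L^2_0(\mu)$ a unitary $G$--representation with no invariant vectors, Blanc's vanishing theorem gives $\overline{H}^*_s(G;L^2_0(\mu))=0$, and van Est transfers this to $\overline{H}^*(\mathfrak g;L_0)=0$ (Lemma~\ref{lem:vanishing2}). One thus only learns that $\beta-M\beta$ is a \emph{limit} of coboundaries with $L^2(\mu)$--coefficients, but that suffices because wedging with a bounded cocycle and then applying $M$ is continuous, and $B^*(\mathfrak g)$ is closed by finite-dimensionality (Lemma~\ref{lem:McommutesCupProduct}). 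Without some replacement for this vanishing input, your multiplicativity argument does not close.

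The quasi-isometry step is also not salvageable as written, because it rests on functoriality of the averaged pullback: $\overline{(\psi'\circ\psi)^*}=\overline{\psi^*}\circ\overline{(\psi')^*}$ is not established and is false in general --- the amenable average of a composition is not the composition of the averages, since the average over $G$--translates of $\psi$ and the average over $H$--translates of $\psi'$ involve unrelated limit measures, and $\psi'\circ\psi$ need not even be ergodic when $\psi$ and $\psi'$ are. Your claim that maps at bounded distance induce the same averaged pullback again defers to the unproven quantitative Poincar\'e lemma. The paper avoids composition entirely: it shows by a degree-theoretic volume estimate (Lemma~\ref{lem:VolumeControl}, Corollary~\ref{cor:volcontrol}) that $\overline{\psi^*}$ is nonzero on the top-degree class, deduces injectivity from Poincar\'e duality for nilpotent Lie algebra cohomology, and then runs the same argument with $G$ and $H$ exchanged to force equality of dimensions. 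You should adopt that strategy (or supply an actual proof of functoriality, which would be a substantial new result).
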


Furthermore, ergodic maps are common, as shown by the following result.
\begin{prop}\label{prop:ExistenceOfErgodic}
  Let $G$ and $H$ be simply connected nilpotent Lie groups, and let $f\in Y_0$. Then there is a sequence $g_n\in G$ such that $f\cdot g_n$ converges to a map $\phi\in Y_0$ uniformly on compact subsets, and $\phi$ is ergodic.
  In particular, if $f$ is a quasi-isometry or a quasi-isometric embedding, then so is $\phi$.
\end{prop}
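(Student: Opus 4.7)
The overall plan is to produce a $G$-invariant ergodic probability measure on the orbit closure of $f$ in $Y_0$, use a generic-point argument to locate $\phi$ inside that closure, and then check that the quasi-isometry constants are inherited by the limit.

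\emph{Orbit closure and weak limit.} Let $X:=\overline{f\cdot G}\subseteq Y_0$ denote the closure of the $G$-orbit of $f$. Since every element of $f\cdot G$ sends $\id_G$ to $\id_H$ and shares a common bound on derivatives, Arzelà--Ascoli makes $X$ compact and metrizable in the topology of uniform convergence on compact sets, and the right $G$-action extends continuously to $X$. The $\mu_n$ are Borel probabilities on $X$, so by weak-$*$ compactness some subsequence $\mu_{n_k}$ converges weakly to a probability measure $\mu$ on $X$. Because $G$ has polynomial growth, the balls $B_n$ form a Følner sequence, with $|B_n\symdiff gB_n|/|B_n|\to 0$ for every $g\in G$; this forces the weak limit $\mu$ to be $G$-invariant.

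\emph{Ergodic component and generic point.} By the ergodic decomposition theorem for continuous amenable actions on compact metric spaces, $\mu$ disintegrates as an integral of ergodic $G$-invariant probabilities; fix any ergodic component $\nu$. Since balls in a connected nilpotent Lie group form a tempered Følner sequence, Lindenstrauss's pointwise ergodic theorem applies. Applying it to a countable dense family in $C(X)$ and taking a diagonal intersection of full $\nu$-measure, one produces a point $\phi\in\supp(\nu)\subseteq X$ for which
\[
  \fint_{B_n} A(\phi\cdot g)\ud g \longrightarrow \int_X A\ud\nu,\qquad \text{for every } A\in C(X).
\]
Thus the Cesàro measures associated to $\phi$ converge weakly to the ergodic measure $\nu$, so $\phi$ is an ergodic map. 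Since $\phi\in X$ and $X$ is metrizable, there is a sequence $g_n\in G$ with $f\cdot g_n\to\phi$ uniformly on compact sets.

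\emph{Inheritance of quasi-isometry constants.} Left multiplications in $G$ and $H$ are isometries, and in left-invariant trivializations $d(f\cdot g)_x$ agrees with $df_{gx}$; consequently each translate $f\cdot g$ shares the Lipschitz constant of $f$ and satisfies the same two-sided inequality
\[
  L^{-1}d_G(x,y)-C \;\le\; d_H((f\cdot g)(x),(f\cdot g)(y)) \;\le\; L\,d_G(x,y)+C.
\]
Both bounds pass to the uniform-on-compacts limit, so $\phi$ is a quasi-isometric embedding with the same constants. For the coarse surjectivity assertion in the quasi-isometry case, given $h\in H$ and a uniform $D$ with $(f\cdot g)(G)$ a $D$-net for every $g$, pick $x_n$ with $d_H((f\cdot g_n)(x_n),h)\le D$; the lower embedding bound confines $x_n$ to a fixed ball, a subsequence converges to some $x\in G$, and the shared Lipschitz control together with uniform convergence on a compact neighbourhood of $x$ give $(f\cdot g_n)(x_n)\to\phi(x)$, hence $d_H(\phi(x),h)\le D$.

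\emph{Main obstacle.} The crux is producing a generic $\phi\in X$ whose Cesàro averages realize a specific ergodic measure. If one prefers to avoid invoking Lindenstrauss's theorem in full, one can argue instead via the mean ergodic theorem applied to a countable dense subset of $C(X)$, still producing a $\nu$-full set of generic points by a diagonal selection within an ergodic component. Everything else is careful bookkeeping about basepoints and uniform estimates.
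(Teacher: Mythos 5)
Your proposal is correct and follows essentially the same route as the paper: both construct a $G$-invariant probability measure on the compact orbit closure from Cesàro averages over balls (you via a weak-$*$ subsequential limit plus the F\o lner property, the paper via an ultralimit), extract an ergodic measure (you via ergodic decomposition, the paper via Krein--Milman extreme points --- equivalent here), and then apply Lindenstrauss's pointwise ergodic theorem to a countable dense subset of $C(X)$ to produce a generic point $\phi$ in the orbit closure. Your explicit verification that quasi-isometry constants pass to the limit is a welcome addition the paper leaves implicit.
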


As a consequence of Theorem~\ref{thm:induced-map} and Proposition~\ref{prop:ExistenceOfErgodic}, we prove the following theorem.
\begin{thm}\label{thm:mainThm}
  Let $G$ and $H$ be quasi-isometric simply connected nilpotent Lie groups. Then there are a smooth Lipschitz quasi-isometry $\phi\from G\to H$ and a real-valued left-invariant mean $M\from L^\infty(G;\mathbb{R})\to \mathbb R$ such that $\overline{\phi^*}\from H^*(\mathfrak{h};\mathbb R)\to H^*(\mathfrak{g};\mathbb R)$ defined as in \eqref{eqn:DefinedAs} is an isomorphism of cohomology algebras over $\mathbb R$.
\end{thm}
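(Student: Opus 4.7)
The plan is to combine Proposition~\ref{prop:ExistenceOfErgodic} and Theorem~\ref{thm:induced-map} in a direct way; the only preliminary work is to upgrade the abstract quasi-isometry guaranteed by hypothesis to a smooth Lipschitz quasi-isometry that lies in $Y_0$.

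First, fix any quasi-isometry $f_0\from G\to H$. A priori $f_0$ is only coarsely Lipschitz and need not be smooth, so I smooth it using the fact that on the simply connected nilpotent Lie group $H$ the exponential map $\exp_H\from \mathfrak h\to H$ is a diffeomorphism. Choose a smooth non-negative compactly supported kernel $\rho\from G\to \R$ with $\int_G \rho = 1$ and define
$$
\tilde f(x) := \exp_H\!\left(\int_G \rho(g)\,\log_H(f_0(xg))\ud g\right).
$$
Using that $\log_H$ is bi-Lipschitz on bounded subsets and that $f_0$ is coarsely Lipschitz, one shows that $\tilde f$ is smooth, globally Lipschitz, and lies at bounded $H$--distance from $f_0$, hence is itself a quasi-isometry. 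Differentiating under the integral moves derivatives onto the smooth compactly supported bump $\rho$, so every iterated derivative of $\tilde f$ is uniformly bounded. Left-translating in $H$, set $f(x) := \tilde f(\id_G)^{-1}\cdot \tilde f(x)$; since left-translation is an isometry of $H$ with respect to its left-invariant metric, $f$ remains a smooth Lipschitz quasi-isometry with all derivatives bounded, and now $f(\id_G)=\id_H$, so $f\in Y_0$.

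Next, apply Proposition~\ref{prop:ExistenceOfErgodic} to $f$ to obtain a sequence $g_n\in G$ such that $f\cdot g_n$ converges uniformly on compact sets to an ergodic map $\phi\in Y_0$; the final sentence of that proposition guarantees that $\phi$ is itself a quasi-isometry. Finally, apply Theorem~\ref{thm:induced-map}: for any left-invariant mean $M$ on $L^\infty(G)$, the induced operator $\overline{\phi^*}$ defined by \eqref{eqn:ContinuousInvariant} is an isomorphism of cohomology algebras $H^*(\mathfrak h)\to H^*(\mathfrak g)$ over $\C$, which is exactly the desired conclusion.

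The main obstacle is the smoothing step, since everything else is a direct appeal to the two earlier results. The delicate point is to verify simultaneously that $\tilde f$ lies at bounded distance from $f_0$, is globally Lipschitz, and has uniformly bounded derivatives of every order, all of which are needed for membership in $Y_0$ and for $\tilde f$ to inherit the quasi-isometry property from $f_0$. These are standard consequences of convolution against a smooth compactly supported kernel combined with the coarse Lipschitz behavior of $f_0$ and the global bi-Lipschitz behavior of $\exp_H$ on bounded sets, but they should be recorded explicitly.
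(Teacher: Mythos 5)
Your overall architecture matches the paper exactly: upgrade the given quasi-isometry to an element of $Y_0$, apply Proposition~\ref{prop:ExistenceOfErgodic} to get an ergodic quasi-isometry, and invoke Theorem~\ref{thm:induced-map}. The gap is in the smoothing step, which you rightly flag as the main obstacle but then resolve with a formula that does not work. First, a quasi-isometry $f_0$ need not be measurable, so $\int_G\rho(g)\log_H(f_0(xg))\ud g$ is not a priori defined; the paper deals with this by first replacing $f_0$ with a \emph{continuous} quasi-isometry built from a bounded-geometry triangulation. Second, and more seriously, averaging in global exponential coordinates is incompatible with the large-scale geometry of a nonabelian $H$: the map $\log_H$ distorts the left-invariant metric polynomially far from the identity, so $F:=\log_H\circ f_0$ is not coarsely Lipschitz as a map into $(\mathfrak h,|\cdot|)$. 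Concretely, in the Heisenberg group the points $p_0=\exp(RX)$ and $p_1=p_0\exp(DY)=\exp(RX+DY+\tfrac{RD}{2}Z)$ satisfy $d_H(p_0,p_1)=O(D)$ while $|\log_H p_1-\log_H p_0|\approx RD$, unbounded in $R$. Consequently your estimate ``differentiating under the integral moves derivatives onto $\rho$'' only bounds $D\Phi(x)$, where $\Phi(x)=\int\rho(x^{-1}y)F(y)\ud y$, by the oscillation of $F$ over a translate of $\supp\rho$, which grows with $d_G(\id_G,x)$; so neither the global Lipschitz bound nor the bounded-derivative condition (which is stated in left-invariant frames) follows. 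Worse, for step $\ge 3$ the Baker--Campbell--Hausdorff expansion is nonlinear in the increments, so $\exp_H$ of the averaged coordinates need not even stay at bounded distance from $f_0(x)$. The paper avoids all of this by citing a convolution lemma tailored to this setting (\cite[Lemma 3.33]{KMX1}) after the triangulation step; if you want a self-contained argument you need a genuinely group-theoretic averaging (e.g., a Riemannian center of mass, or smoothing layer by layer along the lower central series), not a linear average of $\log_H$-coordinates.

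A smaller point: your final sentence claims the conclusion ``for any left-invariant mean $M$.'' That is stronger than what Theorem~\ref{thm:induced-map} gives and is false in general: even when $\lim_n\fint_{B_n}u\ud g$ exists, an arbitrary left-invariant mean need not return that limit. The theorem only requires exhibiting \emph{some} mean, namely any weak-$*$ limit point of the ball averages, which agrees with the limit in \eqref{eqn:ContinuousInvariant} on the relevant functions; restrict your claim accordingly.
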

This implies the results of Shalom, Sauer, and Gotfredsen--Kyed and constructs an explicit isomorphism from $H^*(\mathfrak{h};\mathbb R)$ to $H^*(\mathfrak{g};\mathbb R)$. If one could show that this isomorphism also preserves higher cohomology operations, it would imply that $\mathfrak{g}$ and $\mathfrak{h}$ are isomorphic, but this remains open.

Since Theorem~\ref{thm:induced-map} is not limited to quasi-isometries, it can be applied to other maps. One example, which we learned from Kyle Hansen, is the case of maps with positive asymptotic degree (see \cite{BerdGuthManin}). If $G$ and $H$ are simply connected nilpotent groups of the same dimension, $\psi\from G\to H$ is Lipschitz, and $\omega$ is the volume form on $H$, we say that $\psi$ has \emph{positive asymptotic degree} if 
$$\limsup_{R\to \infty} \frac{\int_{B^G_R} \psi^*\omega}{\big|B^G_R\big|} > 0,$$
where $B^G_R$ is the ball of radius $R$ around the origin in $G$. If $\psi$ is an ergodic map with positive asymptotic degree, then $\overline{\psi^*}\omega\ne 0$, so by Poincar\'e duality (see the proof of Theorem~\ref{thm:induced-map}), $\overline{\psi^*}$ is injective on cohomology. Hence, for example, there is no ergodic Lipschitz map $\psi$ from the Heisenberg group to $\mathbb{R}^3$ with positive asymptotic degree.

\subsection*{Outline of paper}
In order to use results on cohomology with coefficients in a unitary representation, we use complex coefficients throughout this paper, so $C^*(\mathfrak{g}),H^*(\mathfrak{g})$ denote $C^*(\mathfrak{g};\C),H^*(\mathfrak{g};\C)$, respectively.

In Section \ref{sec:bounded-derivs} we start by defining the locally compact space $Y_0$ of smooth maps with bounded derivatives from $G$ to $H$, and an action of $G$ on $Y_0$. We prove Proposition~\ref{prop:ExistenceOfErgodic} in Section~\ref{sec:ergodic} by showing that if $\phi \in Y_0$, then its limit measure is a $G$--invariant probability measure. We then use the Krein--Milman theorem to construct an ergodic measure $\mu$, and use an ergodic theorem of E.\ Lindenstrauss (see Theorem~\ref{thm:lindenstrauss}) to construct an ergodic $\phi$.

For any left-invariant differential form $\omega\in C^d(\mathfrak{h})$ and any $\psi\in Y_0$, the pullback $\psi^*\omega$ is a smooth bounded $d$--form which depends on $\psi$. In Section~\ref{sec:Pullbacks} we define an algebra $C^{\infty,\infty}(Y_0)$ of complex-valued functions on $Y_0$ such that for each $\alpha\in C^{\infty,\infty}(Y_0)$ and $\psi\in Y_0$, there is a smooth function $\alpha_\psi\in C^{\infty}(G)$ with bounded derivatives.

    Using this algebra, we construct a cochain complex $C^*(\mathfrak{g};C^{\infty,\infty}(Y_0))$ that represents forms on $G$ that depend on an element of $Y_0$, i.e., for every $\beta\in C^d(\mathfrak{g};C^{\infty,\infty}(Y_0))$ and $\psi\in Y_0$, there is a smooth bounded $d$--form $\beta_\psi$ on $G$. This lets us construct a universal pullback map $T^\sharp \from C^*(\mathfrak{h})\to C^*(\mathfrak{g};C^{\infty,\infty}(Y_0))$ such that for each left-invariant form $\omega\in C^d(\mathfrak{h})$, and $\psi\in Y_0$,
$$
(T^\sharp\omega)_\psi = \psi^*\omega.
$$
Such $T^\sharp$ is a map of differential graded algebras, and thus induces an algebra homomorphism from the cohomology $H^*(\mathfrak{h})$ to $H^*(\mathfrak{g};C^{\infty,\infty}(Y_0))$, see Lemma \ref{lem:Tsharp}.

In Section~\ref{sec:VanishingLemma} we prove a consequence of a vanishing result of Blanc, see Lemma~\ref{lem:vanishing2}. We use this result to show that if $\mu$ is ergodic under the action of $G$, $L^2(\mu)$ is the set of $L^2$ functions on $Y_0$, and $M\from L^2(\mu)\to \mathbb{C}$ is the map $M(f):=\int_{Y_0} f\ud \mu$, then $M$ induces an algebra homomorphism from the cohomology ${H}^*(\mathfrak{g};C^{\infty,\infty}(Y_0))$ to $H^*(\mathfrak{g})$, see Lemma \ref{lem:McommutesCupProduct} in Section \ref{sec:Integration}. Since we can factor $\overline{\phi^*}=M\circ T^\sharp$ (see Lemma \ref{lemma:LimitExists}), this implies that $\overline{\phi^*}$ is an algebra homomorphism. We note that $\overline{\phi^*}$ descends to a map from $H^*(\mathfrak{h};\mathbb R)$ to $H^*(\mathfrak{g};\mathbb R)$, and thus this settles the first part of Theorem~\ref{thm:induced-map}.

Finally, in Section~\ref{sec:proof}, we prove the last parts of Theorem~\ref{thm:mainThm}, and Theorem~\ref{thm:induced-map}.
We prove that if $\phi$ is an ergodic quasi-isometry and $\omega$ is the volume form on $H$, then $\overline{\phi^*}(\omega)\ne 0$, see Corollary \ref{cor:volcontrol}. Thus, by Poincaré duality, $\overline{\phi^*}$ is an injective homomorphism from $H^*(\mathfrak{h};\mathbb R)$ to $H^*(\mathfrak{g};\mathbb R)$. By applying the same argument with $G$ and $H$ switched, we obtain an injective homomorphism from $H^*(\mathfrak{g};\mathbb R)$ to $H^*(\mathfrak{h};\mathbb R)$. Thus, since they are finite dimensional, $H^*(\mathfrak{h};\mathbb R)$ and $H^*(\mathfrak{g};\mathbb R)$ are isomorphic algebras, as desired.
\smallskip

\subsection*{Acknowledgments} We thank Roman Sauer for having shared with us his notes with Arthur Bartels on $C_\infty$ algebras and morphisms. We also thank Kyle Hansen, Bruce Kleiner, Enrico Le Donne, Fedya Manin, Assaf Naor, and Stefan Wenger for useful discussions and encouragements during various stages of this project. {We thank the anonymous reviewer for their careful reading of the paper and useful suggestions.}  G.A. acknowledges the financial support of the Courant Institute and the AMS-Simons Travel Grant. R.Y. was supported by the National Science Foundation under Grant No.\ 2005609.
\smallskip

Data sharing not applicable to this article as no datasets were generated or analyzed during the
current study.
\smallskip

\textbf{Conflict of interest}. The authors state that there is no conflict of interest.

\section{Preliminaries}\label{sec:bounded-derivs}
Let $G$ and $H$ be simply connected nilpotent Lie groups. We denote the identities of $G$ and $H$ by $\id_G$ and $\id_H$.  Let $\mathfrak{g}$ and $\mathfrak{h}$ be their Lie algebras; we identify $\mathfrak{g}$ with the tangent space $T_{\id_G}G$ and with the space $\mathcal{L}_G$ of left-invariant vector fields on $G$. 

We fix Haar measures on $G$ and $H$. We denote the Haar measure of a subset $U\subset G$ by $|U|$ and the integral with respect to Haar measure by $\int_G \beta(g) \ud g$. We denote the average with respect to Haar measure by
$$
\fint_{U}\beta \ud g := \frac{1}{|U|} \int_{U} \beta \ud g.
$$

We will need to define some spaces of smooth maps with bounded derivatives. 
Let $C^{\infty, \infty}(G)\subset C^\infty(G)$ be the space of smooth functions $\alpha\from G\to \C$ which have bounded derivatives in the sense that $\mathcal{D}\alpha\in L^\infty(G)$ for any left-invariant differential operator $\mathcal{D}$. That is, if $V_1,\dots, V_m \in \mathcal{L}_G$ is a basis of the Lie algebra of $G$, then
$$V_{i_1}\dots V_{i_d} \alpha \in L^\infty(G)$$
for any multi-index $I=(i_1,\dots,i_d)$.

Let $U_1,\dots, U_{m'}\in \mathfrak{h}$ be a basis of the Lie algebra of $H$. Suppose that $\alpha \in C^\infty(G,H)$. Let $m_{ij}\from G\to \R$ be the matrix coefficients of $D_g\alpha\from T_gG\to T_{\alpha(g)}H$ , i.e,
$$D_g\alpha(V_i(g)) = \sum_j m_{ij}(g) U_j(\alpha(g)).$$
We say that $\alpha \in C^{\infty,\infty}(G,H)$ if $m_{ij}\in C^{\infty,\infty}(G)$ for all $i$ and $j$; if so, we say that $\alpha$ has \emph{bounded derivatives}. 
Let
\begin{equation}\label{eq:def-bounded}
  Y_0(G,H) := \{\alpha \in C^{\infty,\infty}(G,H) : \alpha(\id_G)=\id_H\}.
\end{equation}
Note that $Y_0(G,H)\subset \Lip(G,H)$. We equip $C^{\infty,\infty}(G)$ and $Y_0(G,H)$ with the Whitney topology, so that a sequence $\alpha_1,\alpha_2,\dots$ converges to $\alpha$ if and only if for every compact $K\subset G$, and every $n\in\mathbb N$, we have $\alpha_i\to \alpha$ in the $C^n$--topology on $K$ for all $n$. 
This makes $C^{\infty,\infty}(G)$ and $Y_0(G,H)$ locally compact Hausdorff spaces. In fact,  $C^{\infty,\infty}(G)$ and $Y_0(G,H)$ are metrizable. Let $\mathcal{D}_1, \mathcal{D}_2,\dots$ be a countable basis of the set of left-invariant differential operators, and $K_1\subset K_2\subset \dots$ be an exhaustion of compact sets in $G$. For $\alpha,\beta\in C^{\infty,\infty}(G)$, and $i,j\geq 1$, define $p_{i,j}(\alpha,\beta):=\|\mathcal{D}_i[\beta-\alpha]\|_{L^\infty(K_j)}$. Then, re-indexing $\{p_{i,j}\}_{i,j\geq 1}=\{p_k\}_{k\geq 1}$, we have that
$$
d_{\infty,\infty}(\alpha,\beta) = \sum_{k=1}^\infty \min\left\{p_k(\alpha,\beta), 2^{-k}\right\},
$$
is a metric on $C^{\infty,\infty}(G)$. A similar formula defines a metric on $Y_0(G,H)$.

\section{Ergodic maps}\label{sec:ergodic}

\subsection{Definitions}
Let $G$ and $H$ be simply connected nilpotent Lie groups and let $Y_0=Y_0(G,H)$ be as in Section~\ref{sec:bounded-derivs}.
For $g\in G$, and $h\in H$, let $l_g\from G\to G$ and $l_h\from H\to H$ be the left-translation maps.

For any $g\in G$ and $\alpha\in Y_0$, let 
\begin{equation}\label{eq:tau-lambda}
\alpha\cdot g := l_{\alpha(g)}^{-1}\circ\alpha\circ l_g.
\end{equation}
One can check that $\alpha\cdot g \in Y_0$ and that for $g_1,g_2\in G$,
\begin{multline*}
  (\alpha \cdot g_1) \cdot g_2 = l_{(\alpha\cdot g_1)(g_2)}^{-1}\circ l_{\alpha(g_1)}^{-1} \circ \alpha \circ l_{g_1g_2} \\ = l_{\alpha(g_1)^{-1}\alpha(g_1g_2)}^{-1}\circ l_{\alpha(g_1)}^{-1} \circ \alpha \circ l_{g_1g_2} 
  = \alpha\cdot (g_1g_2).
\end{multline*}
That is, this is a right action of $G$ on $Y_0$. Note that the derivatives of $\alpha\cdot g$ satisfy bounds that depend only on $\alpha$, so for any $\alpha\in Y_0$, the closure $\clos(\alpha\cdot G)$ is compact by the Arzelà--Ascoli Theorem.

\begin{defn}\label{def:ErgodicMap}
  Let $y \in Y_0$. For every $n>0$, there is a measure $\mu_n$ on $Y_0$ such that for any continuous function $A \from Y_0\to \R$,
  \begin{equation}\label{eq:def-mu-n}
    \int A \ud \mu_n := \fint_{B_n} A(y\cdot g)\ud g.
  \end{equation}
 If the $\mu_n$'s converge weakly to a measure $\mu$ on $Y_0$, we call $\mu$ the \emph{limit measure} of $y$. Since the orbit $y\cdot G$ has compact closure, if $\mu$ exists, then $\mu$ is a probability measure supported on a compact set, and the amenability of $G$ implies that $\mu$ is $G$--invariant. We say that $y$ is an \emph{ergodic map} if $\mu$ exists and is ergodic with respect to the action of $G$.
\end{defn}

\subsection{Constructing ergodic maps}
We construct ergodic maps using the Krein--Milman theorem and the following theorem of E.\ Lindenstrauss \cite[Theorem 1.2]{Lindenstrauss}. 
\begin{thm}[Lindenstrauss]\label{thm:lindenstrauss}
  Let $G$ be an amenable group equipped with a left-invariant Haar measure and let $(X,\eta)$ be a probability space equipped with an ergodic left action of $G$. Let $F_n\subset G$ be a tempered Følner sequence. Then for any $f\in L_1(\eta)$, the following holds for \(\eta\)--a.e. $x\in X$,
  $$\lim_{n\to \infty} \fint_{F_n} f(g\cdot x) \ud g = \int_X f \ud \eta.$$
\end{thm}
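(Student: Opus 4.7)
The plan is to follow the classical route to a pointwise ergodic theorem via a maximal inequality plus convergence on a dense subclass of $L^1$. First I would define the maximal operator $Mf(x):=\sup_{n} \fint_{F_n} |f(g\cdot x)|\ud g$ and the averaging operators $A_n f(x):=\fint_{F_n} f(g\cdot x)\ud g$. The whole argument then splits into (i) an $L^1$ weak-type bound for $M$, (ii) pointwise convergence on a dense subset, and (iii) identifying the limit.

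The hardest step, and the one that is specific to amenable groups, is the maximal inequality $\eta(\{Mf>\lambda\})\leq C\|f\|_{1}/\lambda$. To obtain this I would follow Lindenstrauss and prove a Vitali-type covering lemma for tempered Følner sequences: given finitely many points $x_i$ and scales $n(i)$, one can extract a subfamily of the ``shadows'' $F_{n(i)}^{-1}x_i$ which is almost-disjoint, with total $\eta$-measure comparable to the measure of the union. This is where temperedness enters in an essential way --- the hypothesis $|\bigcup_{k<n}F_k^{-1}F_n|\leq C|F_n|$ is exactly what rules out the pathological nesting behavior that would otherwise break the covering argument. Transferring this covering lemma to the dynamical setting (via a Fubini / transference argument against the measure $\eta$) yields the weak-type bound by the usual Hardy--Littlewood computation.

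Next I would establish pointwise convergence on a dense subspace of $L^1(\eta)$. The amenable mean ergodic theorem (which is elementary, using only that Følner averages of the unitary action converge strongly to the projection onto invariants) gives an orthogonal decomposition $L^2(\eta)=\cI\oplus\overline{\cB}$ into invariants and the closure of the space of coboundaries $\{h - h\circ g : h\in L^\infty, g\in G\}$. On invariants $A_n f=f$ trivially, and on coboundaries a direct Følner-boundary estimate gives pointwise vanishing of $A_n(h-h\circ g)$ for each $g$, hence on a dense subset of $\overline{\cB}\cap L^\infty$. By ergodicity $\cI$ consists only of constants, so on this dense subclass $A_n f\to \int f\ud\eta$ pointwise almost everywhere.

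Finally, I would combine (i) and (ii) by the standard Banach principle: write a general $f\in L^1$ as $f_0+r$ with $f_0$ in the dense subclass and $\|r\|_1<\varepsilon$; the maximal inequality controls $\limsup |A_n r|$ except on a set of measure $\lesssim \varepsilon$, while $A_n f_0 \to \int f_0\ud\eta$ pointwise. Letting $\varepsilon\to 0$ along a countable sequence yields $A_n f(x)\to \int f\ud\eta$ for $\eta$-a.e.\ $x$. A short verification --- using $|F_n\symdiff gF_n|/|F_n|\to 0$ --- confirms that any a.e.\ limit of $A_n f$ is $G$-invariant, so ergodicity pins down the value of the constant and closes the argument. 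I expect the covering lemma in step (i) to be the main obstacle; everything downstream is essentially formal once that estimate is in hand.
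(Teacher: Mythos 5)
This statement is not proved in the paper at all: it is quoted verbatim from \cite[Theorem 1.2]{Lindenstrauss}, so there is no in-paper argument to compare against; the relevant comparison is with Lindenstrauss's own proof. Your outline reproduces its standard architecture --- a covering lemma transferred to a weak-$(1,1)$ maximal inequality, pointwise convergence on the dense subspace spanned by invariants and bounded coboundaries (where the Følner property kills each coboundary $h-h\circ g$ since $|A_n(h-h\circ g)|\le 2\|h\|_\infty\,|F_n\symdiff gF_n|/|F_n|$), and the Banach principle --- and at that level it is correct. The one place where the sketch understates the difficulty is precisely the step you flag as the main obstacle: under the Shulman temperedness condition $\bigl|\bigcup_{k\le n}F_k^{-1}F_{n+1}\bigr|\le C|F_{n+1}|$, a greedy Vitali-type extraction of an almost-disjoint subfamily of shadows is \emph{not} known to work; the deterministic selection succeeds under the stronger Tempelman condition $|F_n^{-1}F_n|\le C|F_n|$, and the gap between the two is exactly why the pointwise theorem for general amenable groups was open for so long. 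Lindenstrauss's essential innovation is a \emph{randomized} covering argument: one processes the scales from largest to smallest and includes each candidate translate independently with a probability calibrated to the density of the still-uncovered region, and temperedness is what bounds the expected overlap between the small-scale selections and the union of the larger-scale ones. If you intend to actually prove the covering lemma rather than cite it, that probabilistic selection is the idea you must supply; everything downstream of the maximal inequality in your sketch is standard and correct.
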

A \emph{tempered Følner sequence} $F_n$ is a Følner sequence for $G$ such that there is some $C > 0$ such that for all $n$,
\begin{equation}\label{eq:tempered}
  \left| \bigcup_{k\le n} F_k^{-1} F_{n+1}\right| \le C |F_{n+1}|.
\end{equation}
In all our uses of Theorem~\ref{thm:lindenstrauss}, {$G$ will be a nilpotent group, and} we will take $F_n = B_n(\id_G)$, which satisfies \eqref{eq:tempered}. Throughout the paper we will abbreviate $B_n:=B_n(\id_G)$.

To prove Proposition~\ref{prop:ExistenceOfErgodic}, we first construct an invariant ergodic probability measure on $Y_0$. {We stress once more that from now on we adopt the notation set in Section \ref{sec:bounded-derivs}, so that $G,H$ are simply connected nilpotent Lie groups.}
\begin{lemma}\label{lem:ergodic-closure}
  Let $f\in Y_0$. Then there is a $G$--invariant ergodic probability measure $\mu$ on $Y_0$ with support contained in the closure $K = \clos(f\cdot G)$.
\end{lemma}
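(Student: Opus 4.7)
The plan is to combine weak-$*$ compactness with the Krein--Milman theorem applied to the space of $G$--invariant probability measures supported on $K$. First I would observe that $K=\clos(f\cdot G)$ is a compact subset of $Y_0$ by Arzel\`a--Ascoli, and is $G$--invariant because the orbit $f\cdot G$ is $G$--invariant and the $G$--action on $Y_0$ is continuous. Equip the set $\mathcal{M}(K)$ of Borel probability measures on $K$ with the weak-$*$ topology; by Banach--Alaoglu it is a non-empty, compact, convex subset of $C(K)^*$.

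Next I would show that the subset $\mathcal{M}^G(K)\subset\mathcal{M}(K)$ of $G$--invariant probability measures is non-empty. A natural candidate is any weak-$*$ limit of the averages $\mu_n$ from Definition~\ref{def:ErgodicMap} (with $y=f$); since $\supp\mu_n\subset f\cdot B_n\subset K$, each $\mu_n$ lies in $\mathcal{M}(K)$, and by compactness a subsequence $\mu_{n_k}$ converges to some $\mu\in\mathcal{M}(K)$. To obtain $G$--invariance of $\mu$, I would use that the balls $B_n$ form a F\o lner sequence in the unimodular group $G$ (which has polynomial growth): for each $A\in C(K)$ and $h\in G$, the right-invariance of Haar measure yields
$$
\left|\int A(y\cdot h)\ud\mu_n(y)-\int A(y)\ud\mu_n(y)\right|\le \|A\|_\infty\,\frac{|B_n h\symdiff B_n|}{|B_n|}\xrightarrow{n\to\infty}0,
$$
so the weak-$*$ limit $\mu$ is $G$--invariant. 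Hence $\mathcal{M}^G(K)$ is non-empty; being convex and closed in $\mathcal{M}(K)$, it is also compact.

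Finally, by the Krein--Milman theorem, $\mathcal{M}^G(K)$ has an extreme point $\mu$. Since $K$ is a compact metrizable $G$--space on which $G$ acts continuously, the extreme points of $\mathcal{M}^G(K)$ are precisely the ergodic $G$--invariant probability measures, so this $\mu$ is the desired ergodic measure. No serious obstacle is anticipated here: the only standard inputs are the F\o lner property of $(B_n)$ (immediate from polynomial growth of $G$) and the classical correspondence between extreme invariant measures and ergodic ones; the latter is the step most worth double-checking when writing up the proof, but it is routine for continuous actions of locally compact second-countable amenable groups on compact metrizable spaces.
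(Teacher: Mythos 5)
Your proposal is correct and follows essentially the same route as the paper: average over the balls $B_n$ to produce a $G$--invariant probability measure supported on $K$ (the paper uses an ultralimit and the Riesz representation theorem where you take a subsequential weak-$*$ limit and verify invariance via the F\o lner property of the $B_n$, but these are interchangeable), then apply Krein--Milman to the compact convex set of invariant measures on $K$ and use the standard fact that extreme points are ergodic.
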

\begin{proof}
  We first construct an invariant probability measure $\eta$ supported on $K$.  Let $C_c(Y_0)$ be the set of continuous, compactly-supported, complex-valued functions on $Y_0$.  Let $\ulim$ be an ultralimit, and for any $\omega \in C_c(Y_0)$, let
  $$
  \alpha(\omega) := \ulim_{n\to \infty} \fint_{B_n}\omega(f \cdot g)\ud g.$$
  By the amenability of $G$, $\alpha$ is $G$--invariant.
  By the Riesz representation theorem, since $Y_0$ is locally compact and Hausdorff, there is a $G$--invariant Radon measure $\eta$ on $Y_0$ such that $\alpha(\omega) = \int_{Y_0} \omega \ud \eta$ for all $\omega\in C_c(Y_0)$. 

  Since $K$ is compact and $Y_0$ is metrizable, there is a sequence of functions $k_i\in C_c(Y_0)$ such that $k_i=1$ on $K$ and $\supp(k_i) \to K$; then $\alpha(k_i)=1$ for all $i$, so $\eta(K)=1$. Furthermore, if $\omega\in C_c(Y_0)$ has support disjoint from $K$, then $\alpha(\omega)=0$, so $\eta$ is a probability measure supported on $K$.

  We can view the set $P$ of $G$--invariant probability measures supported on $K$ as a convex subset of the dual space $C(K)^*$, equipped with the weak-$*$ topology. Since $K$ is compact, this set is compact; since it contains $\eta$, it is nonempty. By the Krein--Milman Theorem, the set $E(P)$ of extreme points of $P$ is nonempty; let $\mu \in E(P)$. By extremality, $\mu$ is an ergodic probability measure supported on $K$, as desired.
\end{proof}

We use this lemma to prove Proposition~\ref{prop:ExistenceOfErgodic}.
\begin{proof}[Proof of Proposition~\ref{prop:ExistenceOfErgodic}]
  Let $f\in Y_0$, let $K = \clos(f\cdot G)$, and let $\mu$ be a $G$--invariant ergodic probability measure on $K$ as in Lemma~\ref{lem:ergodic-closure}.

  Since $Y_0$ is metrizable, there is a countable dense set $S\subset C(K)$. For every $s\in S$, Theorem~\ref{thm:lindenstrauss} implies that there is a 
$K_s\subset K$ such that $\mu(K\setminus K_s)=0$ and such that for any $y \in K_s$,
  \begin{equation}
    \lim_{n\to \infty} \fint_{B_n} s(y \cdot g^{-1}) \ud g = \lim_{n\to \infty} \fint_{B_n} s(y \cdot g) \ud g = \int_{Y_0} s\ud \mu.
  \end{equation}
  Since $S$ is countable, $\bigcap_s K_s$ is nonempty; let $\phi \in \bigcap_s K_s$. Then for any $A\in C(K)$, 
  \begin{equation}\label{eqn:Gamma}
    \lim_{n\to \infty} \fint_{B_n} A(\phi \cdot g) \ud g = \int_{Y_0} A\ud \mu,
  \end{equation}
  i.e., $\phi$ is ergodic with limit measure $\mu$.

  Furthermore, since $\phi\in K=\clos(f\cdot G)$, there is a sequence $g_1,g_2,\dots$ such that $f\cdot g_n$ converges to $\phi$ on compact subsets, as desired. 
  
  {Let us now prove the last part of the Proposition. By left-invariance of the distance, and the very definition of $f\cdot g_n$ in \eqref{eq:tau-lambda}, we get that, for every $n\in\mathbb N$, $f\cdot g_n$ is a quasi-isometric embedding with the same constants of $f$ whenever $f$ is a quasi-isometric embedding, see \eqref{eqn:LCQiEmbedding}. Furthermore, since $f\cdot g_n$ converges to $\phi$ on compact subsets, $\phi$ verifies \eqref{eqn:LCQiEmbedding} with the same constants of $f$ whenever $f$ is a quasi-isometric embedding. As a result, if $f$ is a quasi-isometric embedding, so is $\phi$, as desired. 
  
  Let us now assume that in addition $f$ is a quasi-isometry, see \eqref{eqn:LCQiEmbedding}. Fix $h\in H$, and take $g'_n$ such that $d(f(g'_n),f(g_n)h)\leq C$. By definition of $f\cdot g_n$, calling $\hat g_n:=(g_n)^{-1}g_n'$, we get $d((f\cdot g_n)(\hat g_n),h)\leq C$. Since $d(f(g'_n),f(g_n)h)\leq C$ and $f$ is a quasi-isometric embedding we have that $d(\hat g_n,\mathrm{id}_G)$ is bounded from above independently on $n$. Thus, up to subsequences, $\hat g_n\to \tilde g$. Hence, passing to the limit $d((f\cdot g_n)(\hat g_n),h)\leq C$, we get $d(\phi(\tilde g),h)\leq C$, thus proving that $\phi$ is a quasi-isometry, as desired.}
\end{proof}

\section{The universal pullback map}\label{sec:Pullbacks}
{Let $\phi\in Y_0$}. For any left-invariant form $\omega\in \bigwedge^*\mathfrak{h}$, the pullback $\phi^*\omega$ is a differential form on $G$ which is bounded, but generally not left-invariant.

One of the main ideas of the proof of Theorem~\ref{thm:mainThm} is that we can write such pullbacks in terms of a space of functions $A := C^{\infty,\infty}(Y_0)\subset C(Y_0)$ (where $C(Y_0)$ is the set of continuous functions from $Y_0$ to $\C$), and the corresponding differential graded algebra $C^*(\mathfrak{g};A)$.

Each element $\alpha\in A$ will correspond to a $G$--equivariant operator that sends an element $\phi\in Y_0$ to a smooth function $\alpha_\phi \in C^{\infty,\infty}(G)$, and each $\beta\in C^*(\mathfrak{g};A)$ will correspond to a $G$--equivariant operator that sends an element $\phi\in Y_0$ to a differential form $\beta_\phi$. We will use this correspondence to construct the following universal pullback map.
\begin{lemma}\label{lem:Tsharp}
  There is a morphism of differential graded algebras $T^\sharp \from C^*(\mathfrak{h}) \to C^*(\mathfrak{g};A)$ such that for every $\omega\in C^k(\mathfrak{h})$ and $\phi\in Y_0$,
  \begin{equation}\label{eqn:ImportantEquality}
    (T^\sharp\omega)_\phi = \phi^*\omega.
  \end{equation} 
\end{lemma}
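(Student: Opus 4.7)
The plan is to define $T^\sharp$ by evaluating pullbacks at the identity and then invoking $G$-equivariance of the correspondence $\beta\mapsto\beta_\phi$. For a left-invariant form $\omega\in C^k(\mathfrak{h})=\wedge^k\mathfrak{h}^*$ and $X_1,\dots,X_k\in\mathfrak{g}$, I would set
\begin{equation*}
  T^\sharp\omega(X_1,\dots,X_k)(\phi) := \omega\bigl(D_{\id_G}\phi\,X_1,\dots,D_{\id_G}\phi\,X_k\bigr) = (\phi^*\omega)_{\id_G}(X_1,\dots,X_k).
\end{equation*}
This is multilinear and alternating in the $X_i$, so it defines an element of $\Hom(\wedge^k\mathfrak{g},\mathrm{Maps}(Y_0,\C))$. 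To see it actually lies in $C^k(\mathfrak{g};A)=\wedge^k\mathfrak{g}^*\otimes A$, I would expand the right-hand side as a polynomial in the matrix coefficients $m_{ij}(\phi)(\id_G)$ of $D_{\id_G}\phi$ relative to the fixed bases of $\mathfrak{g}$ and $\mathfrak{h}$, and then invoke the definition of $C^{\infty,\infty}(Y_0)$, under which these matrix coefficients, viewed as functions of $\phi$, lie in $A$.

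To verify identity (\ref{eqn:ImportantEquality}), I would use that $G$-equivariance forces $\alpha_\phi(g)=\alpha(\phi\cdot g)$ for $\alpha\in A$, and hence on $k$-cochains,
\begin{equation*}
  (\beta_\phi)_g\bigl(X_1(g),\dots,X_k(g)\bigr) = \beta(X_1,\dots,X_k)(\phi\cdot g), \qquad \beta\in C^k(\mathfrak{g};A).
\end{equation*}
Applied to $\beta=T^\sharp\omega$, the right-hand side becomes $((\phi\cdot g)^*\omega)_{\id_G}(X_1,\dots,X_k)$. By (\ref{eq:tau-lambda}),
\begin{equation*}
  (\phi\cdot g)^*\omega = l_g^*\,\phi^*\,l_{\phi(g)}^{-1,*}\omega = l_g^*\,\phi^*\omega,
\end{equation*}
using left-invariance of $\omega$. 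Evaluating at $\id_G$ on $(X_1,\dots,X_k)$ and then invoking left-invariance of the $X_i$ yields $(\phi^*\omega)_g(X_1(g),\dots,X_k(g))$, as desired.

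Finally, to show $T^\sharp$ is a morphism of DGAs, I would first observe that the correspondence $\beta\mapsto\beta_\phi$ is injective, since $\beta(X_1,\dots,X_k)(\phi)$ is recovered by evaluating $\beta_\phi$ at $g=\id_G$. So it suffices to check compatibility after passing to $\phi$-dependent forms on $G$. Compatibility with the wedge product then reduces to functoriality $\phi^*(\omega\wedge\eta)=\phi^*\omega\wedge\phi^*\eta$; compatibility with the differential reduces to $d(\phi^*\omega)=\phi^*(d\omega)$. The nontrivial input is that under the correspondence, the Chevalley--Eilenberg differential on $C^*(\mathfrak{g};A)$ matches the de Rham differential on forms on $G$. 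This is the standard identification of left-invariant forms with values in a $\mathfrak{g}$-module and their CE cochains, the key infinitesimal identity being
\begin{equation*}
  X(\alpha_\phi)\big|_{\id_G} = \frac{d}{dt}\bigg|_0 \alpha(\phi\cdot\exp(tX)) = (X\cdot\alpha)(\phi), \qquad \alpha\in A,\ X\in\mathfrak{g},
\end{equation*}
where the right-hand side is the infinitesimal right action of $\mathfrak{g}$ on $A$ inherited from the $G$-action on $Y_0$. I expect the main obstacle is carefully setting up this intertwining of the $\mathfrak{g}$-action on $A$ with left-invariant differentiation on $G$ once $C^{\infty,\infty}(Y_0)$ and its module structure are in place; everything else is essentially formal once the smoothness of matrix coefficients in $\phi$ is established in the definition of $A$.
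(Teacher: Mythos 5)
Your proposal is correct and follows essentially the same route as the paper: you define $T^\sharp\omega(\lambda)$ as the function $\phi\mapsto(\phi^*\omega)_{\id_G}(\lambda)$, verify membership in $A$ via the smooth $G$--equivariant operator correspondence of Lemma~\ref{lem:c-bijection} (the paper checks the operator $\phi\mapsto\phi^*\omega(\lambda)$ directly where you expand in matrix coefficients, but this is cosmetic), recover \eqref{eqn:ImportantEquality} from the equivariance identity $(\phi\cdot g)^*\omega=l_g^*\phi^*\omega$, and reduce the DGA-morphism property to functoriality of pullback together with the compatibilities \eqref{eq:preserve-diff}--\eqref{eq:preserve-wedge} and the injectivity of $\beta\mapsto(\beta_\phi)_{\phi\in Y_0}$. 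No gaps.
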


Then, in the next Section \ref{sec:VanishingLemma}, we will give conditions for $T^\sharp\omega$ to be a limit of coboundaries, i.e., a reduced coboundary.

\subsection{Equivariant operators and $C^{\infty,\infty}(Y_0)$}
Let $C(Y_0)$ and $C(G)$ be the spaces of complex-valued continuous functions on $Y_0$ and $G$ respectively, equipped with the topology of uniform convergence on compact sets. 
If $\mathcal{D}\from Y_0 \to C(G)$, we say that $\mathcal{D}$ is \emph{$G$--equivariant} if for all $g, h\in G$ and $\phi\in Y_0$,
\begin{equation}\label{eq:equivar-op}
  \mathcal{D}[\phi \cdot g](h) = \mathcal{D}[\phi](gh),
\end{equation}
i.e., $\mathcal{D}[\phi \cdot g] = \mathcal{D}[\phi] \circ l_g$. Then $\mathcal{D}[\phi]$ has the same symmetries as $\phi$; if $g\in G$ and $\phi\cdot g = \phi$, then $\mathcal{D}[\phi] \circ l_g = \mathcal{D}[\phi]$.

{ Let $\mathsf{Op}$ be the set of continuous $G$--equivariant operators $\mathcal{D}\from Y_0 \to C(G)$.} We define $\mathsf{Op}^{\infty,\infty}\subset \mathsf{Op}$ to be the set of \emph{smooth} $G$--equivariant operators, i.e., operators $\mathcal{D}\from Y_0\to C^{\infty,\infty}(G)$ such that $\mathcal{D}$ is continuous with respect to the topology on $C^{\infty,\infty}(G)$ and that satisfy \eqref{eq:equivar-op}.

The most important elements of $\mathsf{Op}^{\infty,\infty}$ in this paper correspond to pullbacks of left-invariant forms on $H$.
For example, if $V\in \wedge^d \mathfrak{g}$ is a left-invariant field of $d$--vectors on $G$ and $\omega \in C^d(\mathfrak{h})$ is a left-invariant $d$--form, then
\begin{equation}\label{eq:pullback-example}
  \mathcal{D}[\phi] := \phi^*\omega(V) = \omega(\phi_*(V))
\end{equation}
is smooth and satisfies \eqref{eq:equivar-op}, so $\mathcal{D}\in \mathsf{Op}^{\infty,\infty}$.

We will use $\mathsf{Op}^{\infty,\infty}$ to define $C^{\infty,\infty}(Y_0)$, an algebra of functions on $Y_0$. First, we can represent elements of $\mathsf{Op}$ as continuous functions on $Y_0$.
\begin{lemma}\label{lem:c-bijection}
Let $P\from \mathsf{Op} \to C(Y_0)$, 
  $$P(\mathcal{D})(\phi) := \mathcal{D}[\phi](\id_G).$$
  Then $P$ is a bijection with inverse $P^{-1}(\alpha) = \mathcal{D}_\alpha$, where for any $\alpha\in C(Y_0)$, we let $\mathcal{D}_\alpha\from Y_0\to C(G)$ be the map
  \begin{equation}\label{eq:def-dalpha}
    \mathcal{D}_\alpha[\phi](h) := \alpha(\phi \cdot h), \text{\qquad} \phi\in Y_0, h\in G.
  \end{equation}
\end{lemma}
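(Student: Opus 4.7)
The plan is to verify the three ingredients of a bijection: (a) $P$ lands in $C(Y_0)$, (b) $P^{-1} = \mathcal{D}_{(\cdot)}$ actually lands in $\mathsf{Op}$, and (c) the two maps are mutually inverse. The key technical point is (b), continuity of $\mathcal{D}_\alpha$, which reduces to joint continuity of the action map $Y_0 \times G \to Y_0$, $(\phi,g)\mapsto \phi\cdot g$.

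First I would check that $P$ is well-defined. The evaluation functional $\mathrm{ev}_{\id_G}\from C(G)\to \C$ is continuous in the topology of uniform convergence on compact sets, so $P(\mathcal{D}) = \mathrm{ev}_{\id_G}\circ \mathcal{D}$ is a composition of continuous maps, hence in $C(Y_0)$.

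Next I would show that $\mathcal{D}_\alpha\in \mathsf{Op}$ for $\alpha\in C(Y_0)$. The equivariance identity follows directly from associativity of the action:
\begin{equation*}
  \mathcal{D}_\alpha[\phi\cdot g](h) = \alpha((\phi\cdot g)\cdot h) = \alpha(\phi\cdot (gh)) = \mathcal{D}_\alpha[\phi](gh).
\end{equation*}
For continuity, fix $\phi\in Y_0$ and a compact $K\subset G$. I need to show that $\mathcal{D}_\alpha[\phi_n]\to \mathcal{D}_\alpha[\phi]$ uniformly on $K$ whenever $\phi_n\to\phi$ in $Y_0$. Here I would first argue that the map $(\phi,g)\mapsto \phi\cdot g$ is continuous from $Y_0\times G$ to $Y_0$: given the formula $\phi\cdot g = l_{\phi(g)}^{-1}\circ \phi \circ l_g$, small perturbations of $\phi$ (in the Whitney topology) and $g$ change $\phi(g)$ continuously and shift derivatives continuously on any compact set, so the composition varies continuously. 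Consequently $\{\phi_n\cdot h : n\in\N, h\in K\}\cup \{\phi\cdot h : h\in K\}$ has compact closure in $Y_0$, and the continuous function $\alpha$ is uniformly continuous on this compact set. Combined with the fact that $(\phi_n,h)\mapsto \phi_n\cdot h$ is equicontinuous in $h\in K$ (a consequence of uniform derivative bounds), one gets $\sup_{h\in K}|\alpha(\phi_n\cdot h)-\alpha(\phi\cdot h)|\to 0$.

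Finally the two composition identities are essentially tautological. For $\alpha\in C(Y_0)$,
\begin{equation*}
  P(\mathcal{D}_\alpha)(\phi) = \mathcal{D}_\alpha[\phi](\id_G) = \alpha(\phi\cdot \id_G) = \alpha(\phi),
\end{equation*}
so $P\circ P^{-1}=\id$. Conversely, for $\mathcal{D}\in \mathsf{Op}$, the equivariance \eqref{eq:equivar-op} with $h=\id_G$ gives $\mathcal{D}[\phi\cdot g](\id_G)=\mathcal{D}[\phi](g)$, hence
\begin{equation*}
  \mathcal{D}_{P(\mathcal{D})}[\phi](h) = P(\mathcal{D})(\phi\cdot h) = \mathcal{D}[\phi\cdot h](\id_G) = \mathcal{D}[\phi](h),
\end{equation*}
so $P^{-1}\circ P=\id$. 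The main obstacle is the continuity verification in step (b); everything else is formal. The argument relies on the precompactness of orbits (already used in Section \ref{sec:ergodic}) together with the explicit formula \eqref{eq:tau-lambda} for the action.
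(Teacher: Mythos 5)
Your proof is correct and follows essentially the same route as the paper's: the same equivariance computation for $\mathcal{D}_\alpha$ and the same two tautological composition identities. The only difference is that you additionally spell out the continuity of $P(\mathcal{D})$ and of $\mathcal{D}_\alpha$ via joint continuity of the action and precompactness of orbits, details the paper's (terser) proof leaves implicit; your treatment of these points is sound.
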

\begin{proof}
  For any $g,h\in G$, $\alpha\in C(Y_0)$, and $\phi\in Y_0$,
  $$\mathcal{D}_\alpha[\phi\cdot g](h) = \alpha(\phi \cdot gh) = \mathcal{D}_\alpha[\phi](gh),$$
  so $\mathcal{D}_\alpha\in \mathsf{Op}$. Furthermore, for $\alpha\in C(Y_0)$ and $\phi\in Y_0$,
  $$
  P(\mathcal{D}_\alpha)(\phi) =  \mathcal{D}_\alpha[\phi](\id_G) = \alpha(\phi),
  $$
  and for $\mathcal{D}\in \mathsf{Op}$,
  $$\mathcal{D}_{P(\mathcal{D})}[\phi](h) = P(\mathcal{D})(\phi\cdot h) = \mathcal{D}[\phi\cdot h](\id_G) = \mathcal{D}[\phi](h),$$
  so $\mathcal{D}_{P(\mathcal{D})} = \mathcal{D}$.
\end{proof}

Let $C^{\infty,\infty}(Y_0) := P(\mathsf{Op}^{\infty,\infty})$. Since elements of $C^{\infty,\infty}(Y_0)$ correspond to operators from $Y_0$ to $C^{\infty,\infty}(G)$, we introduce some notation.
For $\alpha\in C(Y_0)$ and $\phi\in Y_0$, we let $\alpha_\phi := \mathcal{D}_\alpha[\phi] \in C(G),$
i.e., 
\begin{equation}\label{eqn:DefAlphay}
\alpha_\phi(g) := \alpha(\phi\cdot g) \text{\qquad for all }g\in G.
\end{equation}
Then {by \eqref{eqn:DefAlphay}, and the fact that $\cdot$ is a right action,}
\begin{equation}\label{eqn:alphay-sym}
  \alpha_{\phi \cdot g}(h) = \alpha_{\phi}(gh) \text{\qquad for all }g,h\in G.
\end{equation}
By the following proposition, $C^{\infty,\infty}(Y_0)$ is an algebra.
\begin{prop}\label{prop:PropertiesOfCinftyinfty}
    The following properties hold:
    \begin{enumerate}
    \item $C^{\infty,\infty}(Y_0)$ is an algebra over $\C$, i.e., it is closed under addition, multiplication, and scalar multiplication.
    \item For every $\alpha\in C^{\infty,\infty}(Y_0)$ and every $X\in \mathfrak{g}$, there is an element $X\alpha \in C^{\infty,\infty}(Y_0)$ such that for any $\phi\in Y_0$,
      \begin{equation}\label{eq:Xalpha-def}
        (X\alpha)_\phi = X[\alpha_\phi].
      \end{equation}
      Furthermore, for any $X,Y\in \mathfrak{g}$ and $\alpha\in C^{\infty,\infty}(Y_0)$,
      \begin{equation}\label{eq:lie-rep}
        [X,Y]\alpha = X[Y \alpha] - Y[X \alpha],
      \end{equation}
      so $C^{\infty,\infty}(Y_0)$ is a $\mathfrak{g}$--module. 
    \item For every $\alpha \in C(Y_0)$, there is a sequence $(\alpha_n)_n \in C^{\infty,\infty}(Y_0)$ such that for any compact $K\subset Y_0$,
      $$\lim_{n\to \infty} \|\alpha_n-\alpha\|_{L^\infty(K)} = 0.$$
    \end{enumerate}
\end{prop}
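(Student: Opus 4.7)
The plan is to use the bijection $P$ of Lemma~\ref{lem:c-bijection}, which identifies $\alpha \in C(Y_0)$ with the operator $\phi \mapsto \alpha_\phi$; by definition, $C^{\infty,\infty}(Y_0)$ consists of those $\alpha$ for which this operator takes values in $C^{\infty,\infty}(G)$ and is continuous with respect to the Whitney topology. Each of the three properties then reduces to transferring the corresponding property of $C^{\infty,\infty}(G)$ and of its natural $\mathfrak{g}$--module structure through this correspondence.

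For (1), the pointwise identities $(\alpha+\beta)_\phi = \alpha_\phi+\beta_\phi$ and $(\alpha\beta)_\phi = \alpha_\phi\beta_\phi$ follow immediately from \eqref{eqn:DefAlphay}, and these remain smooth with bounded derivatives because $C^{\infty,\infty}(G)$ is an algebra (via the Leibniz rule). Continuity of $\phi \mapsto (\alpha+\beta)_\phi$ and $\phi \mapsto (\alpha\beta)_\phi$ into $C^{\infty,\infty}(G)$ follows from joint continuity of addition and multiplication in the Whitney topology, and scalar multiplication is immediate.

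For (2), I would define $(X\alpha)(\phi):=X[\alpha_\phi](\id_G)$, equivalently $\tfrac{d}{dt}\alpha(\phi\cdot \exp(tX))\big|_{t=0}$. Using the identity $\alpha_{\phi\cdot g}(h)=\alpha_\phi(gh)$ from \eqref{eqn:alphay-sym} together with the left-invariance of $X$, a direct computation gives $(X\alpha)_\phi(h) = (X\alpha)(\phi \cdot h) = X[\alpha_{\phi \cdot h}](\id_G) = X[\alpha_\phi](h)$, which is \eqref{eq:Xalpha-def}. Then $(X\alpha)_\phi \in C^{\infty,\infty}(G)$ because $\alpha_\phi$ is, and since left-invariant differentiation is a continuous linear operator on $C^{\infty,\infty}(G)$, the map $\phi \mapsto (X\alpha)_\phi$ remains continuous. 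The bracket identity \eqref{eq:lie-rep} then reduces to the standard bracket relation for left-invariant vector fields acting on $\alpha_\phi$.

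For (3), I would smooth by convolution on $G$: let $\rho_n \in C_c^\infty(G)$ be non-negative with $\int_G \rho_n = 1$ and $\supp \rho_n \to \{\id_G\}$, and set
\[
  \alpha_n(\phi) := \int_G \rho_n(g)\,\alpha(\phi \cdot g)\, \ud g.
\]
For each $\phi$, $\clos(\phi \cdot G)$ is compact by the Arzelà--Ascoli argument of Section~\ref{sec:ergodic}, so $g \mapsto \alpha(\phi \cdot g)$ is bounded and $\alpha_n(\phi)$ is well-defined. The change of variables $u = hg$ yields $(\alpha_n)_\phi(h) = \int_G \rho_n(h^{-1}u)\alpha_\phi(u)\,\ud u$; since $h \mapsto \rho_n(h^{-1}u)$ is smooth with all derivatives uniformly controlled by those of $\rho_n$ and supported in $u \cdot (\supp \rho_n)^{-1}$, any left-invariant differential operator can be pulled inside the integral to act only on $\rho_n$, yielding $(\alpha_n)_\phi \in C^{\infty,\infty}(G)$; a dominated-convergence argument then gives continuity of $\phi \mapsto (\alpha_n)_\phi$ into $C^{\infty,\infty}(G)$, so $\alpha_n \in C^{\infty,\infty}(Y_0)$. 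For convergence on compacts, if $K \subset Y_0$ is compact, then $\{\phi \cdot g : \phi \in K,\, g \in \supp \rho_1\}$ has compact closure, $\alpha$ is uniformly continuous there, and $\phi \cdot g \to \phi$ as $g \to \id_G$ uniformly in $\phi \in K$, so $\sup_{\phi \in K}|\alpha_n(\phi) - \alpha(\phi)| \to 0$. The main subtlety is in (3): we only know a priori that $\phi \mapsto \alpha_\phi$ is continuous in the topology of uniform convergence on compact subsets of $G$, and the convolution with $\rho_n$ must do the work of upgrading this to convergence of all derivatives, which is precisely why smoothing in the $G$--variable is what makes the approximation work.
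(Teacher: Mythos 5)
Your proposal is correct and follows essentially the same route as the paper: pointwise identities for the algebra structure, composition with left-invariant vector fields for the $\mathfrak{g}$--module structure, and convolution with an approximate identity on $G$ (together with uniform continuity of $(\phi,g)\mapsto\alpha_\phi(g)$ on compact sets) for the approximation statement. The only difference is cosmetic — you write the mollification as $\int_G\rho_n(g)\alpha(\phi\cdot g)\ud g$ rather than as $(\alpha_\phi\ast k_n)(\id_G)$ with the kernel reflected — and you are, if anything, slightly more explicit than the paper about why $(\alpha_n)_\phi\in C^{\infty,\infty}(G)$ and why $\phi\mapsto(\alpha_n)_\phi$ is continuous.
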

\begin{proof}  
  Because of the bijection between $C^{\infty,\infty}(Y_0)$ and $\mathsf{Op}^{\infty,\infty}$, we can prove the proposition using either set.

  For item (1), note that for $\alpha,\beta\in C(Y_0)$, every $y\in Y_0$, and every $z\in\mathbb C$ we have $(\alpha+\beta)_y = \alpha_y+\beta_y$, $(\alpha \beta)_y = \alpha_y\beta_y$, and $(z\alpha)_y=z\alpha_y$, so $C^{\infty,\infty}(Y_0)$ is closed under addition, multiplication, and scalar multiplication. 

  To prove item (2), for any left-invariant vector field $X$, $\mathcal{D}\in \mathsf{Op}^{\infty,\infty}$, and $\phi\in Y_0$, let 
  $$
  X\mathcal{D}[\phi] := X[\mathcal{D}[\phi]].
  $$
  It is straightforward to check that $X\mathcal{D}\in \mathsf{Op}^{\infty,\infty}$.

  Furthermore, for $X,Y\in \mathfrak{g}$, $\mathcal{D}\in \mathsf{Op}^{\infty,\infty}$, and $\phi\in Y_0$,
  $$
  [X,Y]\mathcal{D}[\phi]= [X,Y][\mathcal{D}[\phi]] = XY[\mathcal{D}[\phi]] - YX[\mathcal{D}[\phi]] = X[Y\mathcal{D}][\phi] - Y[X\mathcal{D}][\phi],
  $$
  proving \eqref{eq:lie-rep}.

  Finally, we prove item (3). For $h\in C(G)$ and $k\in C_c^\infty(G)$, let $h\ast k \in C^\infty(G)$,
  \begin{equation}\label{eqn:g-Convolution}
    (h\ast k)(x) := \int_{G} h(y) k(y^{-1} x) \ud y.
  \end{equation}  
  
  Let $\mathcal{D}\in \mathsf{Op}$ and $k\in C_c^\infty(G)$. 
  For $\phi \in Y_0$, let
  $$(\mathcal{D}\ast k)[\phi] = \mathcal{D}[\phi] \ast k \in C^{\infty,\infty}(G).$$
  It is readily verified that $\mathcal{D}\ast k$ is $G$--equivariant, so $\mathcal{D}\ast k\in \mathsf{Op}^{\infty,\infty}$. Thus, by Lemma~\ref{lem:c-bijection}, for any $\alpha\in C(Y_0)$, we can define $\alpha\ast k\in C^{\infty,\infty}(Y_0)$ { as $\alpha\ast k = P(\mathcal{D}_\alpha\ast k)$,  so that $\mathcal{D}_{\alpha \ast k} = \mathcal{D}_\alpha\ast k$}.

  Let $(k_n)_n \in C_c^\infty(G)$ be a sequence of nonnegative functions such that \[\int_G k_n(x)\ud x = 1,\] and $\supp(k_n)\subset B_{\frac{1}{n}}(\id_G)$ for all $n$, so that for any $h\in C(G)$, $h\ast k_n\to h$ uniformly on compact sets as $n\to \infty$. 
    
  Let $\alpha_n := \alpha\ast k_n$, so that for any $\phi\in Y_0$, we have
  $$\alpha_n(\phi) = (\alpha_\phi \ast k_n)(\id_G).$$
  and thus $\lim_{n\to \infty} \alpha_n(\phi) = \alpha_\phi(\id_G) = \alpha(\phi)$, i.e., $\alpha_n$ converges to $\alpha$ pointwise.

  We claim that $\alpha_n$ converges to $\alpha$ uniformly on compact sets. Let $K\subset Y_0$ be a compact set. The function $(\phi,g)\mapsto \alpha_\phi(g)$ is continuous, so it is uniformly continuous in $K \times \overline{B}_1(\id_G)$. That is, the functions $\alpha_\phi, \phi\in K$, are uniformly continuous with a uniform modulus of continuity. Thus, for any $\varepsilon>0$, there is an $N>0$ such that for all $\phi\in K$ and $n>N$, 
  $$|\alpha_n(\phi)-\alpha(\phi)| = |(\alpha_\phi \ast k_n)(\id_G) - \alpha_\phi(\id_G)| < \varepsilon,$$
  i.e., $\|\alpha_n-\alpha\|_{L^\infty(K)}\to 0$ as $n\to \infty$. { In the previous inequality we are using the uniform continuity of $\alpha_\phi$ and our choice of the support of $k_n$}. This proves item (3).
\end{proof}

\subsection{Differential forms with twisted coefficients and the universal pullback map}\label{sec:diff-forms}

Next, we define a complex of differential forms with coefficients in $C^{\infty,\infty}(Y_0)$. Let $A=C^{\infty,\infty}(Y_0)$, and for $k\ge 0$ let
\[
C^k(\mathfrak{g};A) := \Hom(\wedge^k \mathfrak{g}, A).
\]
Any cochain $\beta\in C^k(\mathfrak{g};A) = \Hom(\wedge^k \mathfrak{g}, A)$ corresponds to a function $\wedge^k \mathfrak{g}\times Y_0 \to \C$. If we view $\beta$ instead as a function $Y_0 \to \Hom(\wedge^k \mathfrak{g},\C)$, we obtain a map $V_\beta \from Y_0 \to \wedge^k \mathfrak{g}^*$ such that $V_\beta(\phi)(\lambda) = \beta(\lambda)(\phi)$ for all $\phi\in Y_0$ and $\lambda\in \wedge^k\mathfrak{g}$. As before, we can define an operator $\mathcal{D}_\beta \from Y_0 \to \Omega^k(G)$ by letting $\mathcal{D}_\beta[\phi]$ be the $k$--form whose value at any $g\in G$ is
$$\mathcal{D}_\beta[\phi]_g := V_\beta(\phi\cdot g).$$
Then for any left-invariant vector fields $v_1,\dots, v_k\in \mathfrak{g}$, 
\begin{equation}\label{eq:def-form-eval}
  \mathcal{D}_\beta[\phi](v_1,\dots,v_k) = \mathcal{D}_{\beta(v_1,\dots,v_k)}[\phi].
\end{equation}
For conciseness, let $\beta_\phi := \mathcal{D}_\beta[\phi]$.

Since $A$ is a $\mathfrak{g}$--module, we can define a differential and wedge product on $C^*(\mathfrak{g};A)$ by a standard construction: for every $f\in C^k(\mathfrak{g};A)$, and every $X_1,\dots,X_{k+1}\in\mathfrak{g}$, let
\begin{multline}\label{eqn:ExteriorDifferential}
  \mathrm{d}f (X_1,\dots,X_{k+1}):=\sum_{i=1}^{k+1} (-1)^{i+1} X_i\left[f(X_1,\dots,\hat X_i,\dots,X_{k+1})\right]\\
  +\sum_{i<j}(-1)^{i+j}f([X_i,X_j],\dots,\hat X_i,\dots,\hat X_j,\dots,X_{k+1}),
\end{multline}
and for every $\alpha\in C^m(\mathfrak{g};A),\beta\in C^n(\mathfrak{g};A)$, let
\begin{equation}\label{eqn:DefnCupProduct}
  \alpha\wedge\beta:=\frac{(m+n)!}{m!n!}\mathcal{A}(\alpha\otimes\beta)\in C^{m+n}(\mathfrak{g};A),
\end{equation}
where $\alpha\otimes\beta$ is the tensor product and $\mathcal{A}$ is projection on alternating forms, i.e., for every $(0,k)$-tensor $t$, and every $k$-tuple  $X_1,\ldots,X_k\in\mathfrak{g}$
\[
  \mathcal{A}t(X_1,\ldots,X_k):=\frac{1}{k!}\sum_{\sigma \in S_k}\operatorname{sgn}(\sigma)t(X_{\sigma(1)},\ldots,X_{\sigma(k)}),
\]
where $\operatorname{sgn}(\sigma)$ is the signature of $\sigma$.

It is straightforward to check that for any $\alpha, \beta \in C^*(\mathfrak{g};A)$, and $\phi\in Y_0$,
\begin{equation}\label{eq:preserve-diff}
  (\mathrm{d} \alpha)_\phi = \mathrm{d}[\alpha_\phi],
\end{equation}
and
\begin{equation}\label{eq:preserve-wedge}
  (\alpha \wedge \beta)_\phi = \alpha_\phi\wedge \beta_\phi.
\end{equation}
Consequently, one can verify that $C^*(\mathfrak{g};A)$ is a differential graded algebra either directly or by noting that
$$
\mathrm{d}[\mathrm{d}\alpha]_\phi=\mathrm{d}[[\mathrm{d} \alpha]_\phi] = \mathrm{d}[\mathrm{d}[\alpha_\phi]] = 0,
$$
and
\begin{equation}\label{eqn:LiebnizCup}
\mathrm{d}[\alpha\wedge \beta]_\phi = (\mathrm{d}\alpha \wedge \beta + (-1)^{|\alpha|} \alpha \wedge \mathrm{d}\beta)_\phi,
\end{equation}
for all $\phi$.

We use this structure to construct the map $T^\sharp \from C^*(\mathfrak{h}) \to C^*(\mathfrak{g};A)$ and prove Lemma~\ref{lem:Tsharp}.
\begin{proof}[Proof of Lemma~\ref{lem:Tsharp}]
  For every $\omega\in C^k(\mathfrak{h})$ and $\lambda\in \wedge^k \mathfrak{g}$, let
  $$
  \mathcal{D}_{\omega,\lambda}[\phi] := \phi^*\omega(\lambda)\in C^{\infty,\infty}(G).
  $$
  This operator is continuous and satisfies \eqref{eq:equivar-op}, so $\mathcal{D}_{\omega,\lambda}\in \mathsf{Op}^{\infty,\infty}$. {By Lemma~\ref{lem:c-bijection}, there is a $t(\omega,\lambda)\in A$ such that 
  \[
  t(\omega,\lambda)_\phi  =\mathcal{D}_{\omega,\lambda}[\phi]= \phi^*\omega(\lambda),
  \]
  for all $\phi\in Y_0$.
 
  For every $\omega\in C^k(\mathfrak{h})$, we define $T^\sharp\omega\in C^k(\mathfrak{g};A)$ so that $T^\sharp\omega(\lambda):=t(\omega,\lambda)$ for every $\lambda\in\Lambda^k\mathfrak{g}$. Since $\phi^*\omega(\lambda)$ is linear in $\lambda$, $T^\sharp\omega$ is an $A$--valued form, i.e., $T^\sharp\omega\in C^k(\mathfrak{g};A)$. Now, by using the previous equality, and \eqref{eq:def-form-eval} with $\beta=T^\sharp\omega$, we have
  \[
   (T^\sharp\omega)_\phi(\lambda) \stackrel{\eqref{eq:def-form-eval}}{=} 
  (T^\sharp\omega(\lambda))_\phi = \phi^*\omega(\lambda),
  \]
  i.e., $T^\sharp$ satisfies \eqref{eqn:ImportantEquality}.}

  In order to show that $T^\sharp$ is a morphism of differential graded algebra, it remains to check that $T^\sharp$ commutes with the wedge product and differential. For every $\omega,\eta\in C^k(\mathfrak{h})$ and $\phi\in Y_0$, we have
  {
  \[
(T^\sharp(\omega\wedge\eta))_\phi \stackrel{\eqref{eqn:ImportantEquality}}{=} \phi^*(\omega\wedge\eta) = \phi^*\omega\wedge \phi^*\eta = (T^\sharp\omega)_\phi\wedge (T^\sharp\eta)_\phi \stackrel{\eqref{eq:preserve-wedge}}{=} (T^\sharp\omega \wedge T^\sharp\eta)_\phi.
  \]
  }
  Hence $T^\sharp(\omega\wedge\eta) = T^\sharp\omega\wedge T^\sharp\eta$.

  Similarly, for every $\omega\in C^k(\mathfrak{h})$, and every $\phi\in Y_0$,
  {
  \[
    (T^\sharp (\mathrm{d}\omega))_\phi \stackrel{\eqref{eqn:ImportantEquality}}{=} \phi^*(\mathrm{d}\omega)=\mathrm{d}[\phi^*\omega]=\mathrm{d}[(T^\sharp\omega)_\phi]\stackrel{\eqref{eq:preserve-diff}}{=}(\mathrm{d}[T^\sharp\omega])_\phi,
  \]
  }
  thus the proof is complete.
\end{proof}

\section{Amenable averages and the vanishing lemma}\label{sec:VanishingLemma}

In the previous sections, we showed that any ergodic map $\psi\in Y_0$ has a limit measure $\mu$ on $Y_0$ and we constructed a universal pullback map $T^\sharp\from H^*(\mathfrak{h})\to H^*(\mathfrak{g};A)$, where $A=C^{\infty,\infty}(Y_0)$ is a space of continuous functions on $Y_0$ corresponding to smooth $G$-equivariant operators $Y_0\to C^{\infty,\infty}(G)$. {We actually defined $T^\sharp\from C^*(\mathfrak{h})\to C^*(\mathfrak{g};A)$, but by Lemma \ref{lem:Tsharp}, this descends to a map $T^\sharp\from H^*(\mathfrak{h})\to H^*(\mathfrak{g};A)$.} In this section, we show that elements of $C^*(\mathfrak{g};A)$ whose \emph{amenable average} is zero are reduced coboundaries (i.e., limits of coboundaries) in a larger space $C^*(\mathfrak{g};L)$.

We first define the amenable average of a cochain. For any bounded form $\omega\in \Omega^k(G)$ and $p\in G$, denote the value of $\omega$ at $p$ by $\omega_p\in \wedge^k \mathfrak{g}^*$. We define the amenable average
$\overline{\omega} \in \wedge^k \mathfrak{g}^*$ by
\begin{equation}\label{eq:def-amen-avg}
  \overline{\omega} = \lim_{n\to \infty} \fint_{B_n} \omega_g \ud g,
\end{equation}
if the limit exists.

In the rest of this paper, we fix an ergodic map $\psi\in Y_0$ and let $\mu$ be its limit measure. When $\omega = \beta_\psi$, we can describe $\overline{\omega}$ in terms of $\mu$.
\begin{lemma}\label{lem:link-amenable}
  Let $\beta \in C^k(\mathfrak{g};A)$. Let $V_\beta \from Y_0 \to \wedge^k \mathfrak{g}^*$ as in Section~\ref{sec:diff-forms}. Then
  $$\overline{\beta_\psi} = \int V_\beta \ud \mu.$$
\end{lemma}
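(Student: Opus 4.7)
The plan is to reduce the vector-valued identity to a collection of scalar identities, one for each element of a basis of $\wedge^k\mathfrak{g}$, and then match those scalar identities with the definition of the limit measure $\mu$. Pick any basis $\lambda_1,\dots,\lambda_N$ of $\wedge^k\mathfrak{g}$, and let $\alpha_i := \beta(\lambda_i)\in A = C^{\infty,\infty}(Y_0)$. By the definition of $V_\beta$ and of $\beta_\psi$ in Section~\ref{sec:diff-forms}, for every $g\in G$ we have
\[
(\beta_\psi)_g(\lambda_i) = V_\beta(\psi\cdot g)(\lambda_i) = \alpha_i(\psi\cdot g),
\]
so coordinate-wise in the basis dual to $\lambda_1,\dots,\lambda_N$, the claim reduces to
\[
\lim_{n\to\infty}\fint_{B_n}\alpha_i(\psi\cdot g)\ud g = \int_{Y_0}\alpha_i\ud\mu, \qquad i=1,\dots,N.
\]

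The next step is to apply the weak convergence $\mu_n\to\mu$ from Definition~\ref{def:ErgodicMap}. A priori $\alpha_i\in C(Y_0)$ need not be compactly supported, which matters because weak convergence of Radon measures on the locally compact space $Y_0$ is tested against functions in $C_c(Y_0)$. The remedy is to use that the orbit closure $K:=\clos(\psi\cdot G)$ is compact (by Arzelà--Ascoli, as noted in Section~\ref{sec:ergodic}), that $\mu$ is supported on $K$, and that $\psi\cdot g\in K$ for every $g\in G$. Choose $\chi\in C_c(Y_0)$ with $\chi\equiv 1$ on $K$; then $\chi\alpha_i\in C_c(Y_0)$, the integrand $\alpha_i(\psi\cdot g)$ is unchanged when $\alpha_i$ is replaced by $\chi\alpha_i$, and likewise $\int\alpha_i\ud\mu=\int\chi\alpha_i\ud\mu$. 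The weak convergence $\mu_n\to\mu$ applied to $\chi\alpha_i$ then gives exactly the displayed scalar identity.

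Finally, assembling the scalar identities componentwise yields
\[
\overline{\beta_\psi}=\lim_{n\to\infty}\fint_{B_n}(\beta_\psi)_g\ud g = \lim_{n\to\infty}\fint_{B_n}V_\beta(\psi\cdot g)\ud g = \int_{Y_0}V_\beta\ud\mu
\]
in $\wedge^k\mathfrak{g}^*$, which in particular shows that the limit defining $\overline{\beta_\psi}$ exists. There is no genuine obstacle here: the lemma is essentially a bookkeeping identity between the definition of $\overline{\cdot}$ as a Cesàro average along $B_n$ and the definition of $\mu$ as the weak limit of the empirical averages $\mu_n$; the only technical point worth flagging is the compact-support reduction via the bump function $\chi$, which is needed because $C^{\infty,\infty}(Y_0)$ consists of continuous but not necessarily compactly supported functions on $Y_0$.
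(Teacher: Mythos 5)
Your proof is correct and follows essentially the same route as the paper: both arguments amount to observing that $(\beta_\psi)_g = V_\beta(\psi\cdot g)$ and then applying the weak convergence $\mu_n\to\mu$ to the continuous function $V_\beta$ (componentwise in your version). The only difference is that you explicitly justify testing weak convergence against the non-compactly-supported $V_\beta$ via a bump function equal to $1$ on the compact orbit closure $K$ --- a point the paper's one-line proof leaves implicit --- and this refinement is valid since all the measures $\mu_n$ and $\mu$ are supported in $K$.
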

\begin{proof}
  As in Definition~\ref{def:ErgodicMap}, for $n\ge 0$, let $\mu_n$ be the measure on $Y_0$ such that 
  $$\int \alpha \ud \mu_n = \fint_{B_n} \alpha(\psi \cdot g) \ud g$$
  for any continuous $\alpha\in C(Y_0)$, and $\mu_n$ converges weakly to $\mu$. In particular, since $V_\beta\from Y_0\to \wedge^k \mathfrak{g}^*$ is continuous,
  \begin{equation*}
    \lim_{n\to \infty} \fint_{B_n} (\beta_\psi)_g \ud g = \lim_{n\to \infty} \fint_{B_n} V_\beta(\psi\cdot g) \ud g = \lim_{n\to \infty} \int V_\beta \ud \mu_n = \int V_\beta \ud \mu.\qedhere
  \end{equation*}
\end{proof}

In particular, $\overline{\beta_\psi}$ is determined by the values of $V_\beta$ on $\supp(\mu)$, which encourages us to study the space $L^2(\mu)$. 
We equip $L^2(\mu)$ with the left action $g\cdot f(\phi) = f(\phi\cdot g)$ for all $\phi\in Y_0$ and $g\in G$. For any $v\in L^2(\mu)$, let $\eta_v\from G\to L^2(\mu)$, $\eta_v(g) = g\cdot v$.
{We say that $\eta_v$ is smooth if all of its Fréchet derivatives of all orders exist and are continuous. 
\begin{lemma}\label{lem:cinfty-in-L}
  Let $\alpha\in C^{\infty,\infty}(Y_0)$. Then $\alpha\in L^2(\mu)$ and $\eta_\alpha$ is smooth. In fact, for all $X\in\mathfrak{g}$, if $X\alpha$ is as in Proposition~\ref{prop:PropertiesOfCinftyinfty}, then $X[\eta_\alpha] = \eta_{X\alpha}$.
\end{lemma}
\begin{proof}
  Since $\mu$ is supported on a compact set, $\alpha\in L^2(\mu)$. Furthermore, 
  $$\eta_\alpha(g)(\phi)=g\cdot \alpha(\phi) = \alpha(\phi\cdot g) = \alpha_\phi(g).$$
  By Proposition~\ref{prop:PropertiesOfCinftyinfty}, $X[\alpha_\phi] = (X\alpha)_\phi$, so 
  $$
  X[\eta_\alpha](g)(\phi)=X[\alpha_\phi](g) = (X\alpha)_\phi(g) = \eta_{X\alpha}(g)(\phi),
  $$
  as desired.
\end{proof}}

{We let
\begin{equation}\label{eqn:L2infty}
L=(L^2(\mu))^\infty := \{v\in L^2(\mu) : \eta_v \text{ is smooth}\},
\end{equation}}
and put a $\mathfrak{g}$--module structure on $L$ by letting
\begin{equation}\label{eq:module-structures}
  X v := X\eta_v (\id_G) = \frac{\mathrm{d}}{\mathrm{d} t}[\exp(tX) \cdot v]_{t=0} \in L,
\end{equation}
for all $X\in \mathfrak{g}$, and $v\in L$. {By Lemma~\ref{lem:cinfty-in-L}, this extends to $L$ the $\mathfrak{g}$--module structure on $C^{\infty,\infty}(Y_0)$.}
Using this structure, we define the cochain complex $C^*(\mathfrak{g};L)$ in the usual way (see Section~\ref{sec:diff-forms}).

We equip $L$ with the topology such that $\lim_{n\to \infty} v_n = v$ if and only if 
$$
\lim_{n\to \infty} \eta_{v_n} = \eta_v
$$
in the $C^\infty$ topology.

For $a\in A$ and $v\in L$, the function $a$ is bounded on $\supp(\mu)$, so the product $a v$ lies in $L^2(\mu)$. Furthermore, one can check that this product satisfies the usual product rule
\begin{equation}\label{eq:product-rule}
  X[av] = X[a]v + aX[v],
\end{equation}
for any $X\in \mathfrak{g}$, so in fact $av\in L$ and the map $(a,v) \mapsto a v$ is continuous.

Let
\begin{equation}\label{eqn:L20}
L^2_0(\mu):=\left\{f\in L^2(\mu): \int_{Y_0} f\ud \mu = 0\right\},
\end{equation}
and let $L_0 := L\cap L^2_0(\mu)$. Since $\supp \mu$ is compact, every $a\in A$ satisfies $\|a\|_{L^2(\mu)}<\infty$. By Lemma~\ref{lem:link-amenable}, $a \in L_0$ if and only if $\overline{a_\psi} = 0$. We equip $L_0$ with the topology it inherits as a subspace of $L$.

Our main goal in this section is to use results of van Est and Blanc to prove the following lemma.
\begin{lemma}\label{lem:vanishing2}
  For any $n\ge 0$,
  $$\overline{H}^n(\mathfrak{g};L_0) = 0.$$
  Thus, with $\psi$ and $\mu$ as above, if $\beta\in C^n(\mathfrak{g};A)$ is a cocycle and $\overline{\beta_\psi} = 0$, then there is a sequence $\eta_i \in C^{n-1}(\mathfrak{g};L_0)$, $i\in \N$, such that $\lim_i \mathrm{d}\eta_i = \beta$. That is, for any $\lambda\in \wedge^n\mathfrak{g}$, $\mathrm{d}\eta_i(\lambda)$ converges in $L$ to $\beta(\lambda)$.
\end{lemma}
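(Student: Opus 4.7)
The plan is to recognize $L$ and $L_0$ as the spaces of smooth vectors of unitary representations of $G$ on $L^2(\mu)$ and $L^2_0(\mu)$, observe that ergodicity of $\mu$ kills all invariants on the $L^2_0(\mu)$ side, and then invoke a van Est--Blanc vanishing theorem to conclude that the reduced Lie algebra cohomology vanishes.

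First, I would verify the representation-theoretic setup. Because $\mu$ is $G$--invariant, the left action $(g\cdot f)(\phi):=f(\phi\cdot g)$ is a continuous unitary representation of $G$ on $L^2(\mu)$, and by definition \eqref{eqn:L2infty}, $L=(L^2(\mu))^\infty$ is the Fréchet space of smooth vectors for this representation; the $\mathfrak{g}$--module structure \eqref{eq:module-structures} is the standard derived action. The closed subspace $L^2_0(\mu)$ is $G$--invariant, and its smooth vectors are exactly $L_0=L\cap L^2_0(\mu)$. Ergodicity of $\mu$ under the $G$--action means that the only $G$--invariant elements of $L^2(\mu)$ are constants; a constant lies in $L^2_0(\mu)$ only if it is zero, so $L^2_0(\mu)$ has no nonzero $G$--invariant vector.

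Next, I would invoke the vanishing theorem. For a connected, simply connected nilpotent (hence amenable) Lie group $G$, van Est's theorem, in the form extended by Blanc to Fréchet coefficients, identifies the Lie algebra cochain complex $C^*(\mathfrak{g};V^\infty)$ with the differentiable continuous cochain complex of $G$ with values in $V^\infty$, yielding $H^*(\mathfrak{g};V^\infty)\cong H^*_{\mathrm{ct}}(G;V^\infty)$. For amenable $G$ and a continuous unitary representation $V$ with $V^G=0$, the reduced continuous cohomology $\overline{H}^n_{\mathrm{ct}}(G;V^\infty)$ vanishes in every degree; applied to $V=L^2_0(\mu)$, this gives $\overline{H}^n(\mathfrak{g};L_0)=0$ for all $n\ge 0$, the first assertion of the lemma.

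Finally, I would deduce the ``in particular'' statement. The inclusion $A\subset L$ holds because each $\alpha\in A$ is bounded on the compact set $\supp\mu$ and the orbit map $g\mapsto g\cdot\alpha$ is smooth into $L^2(\mu)$: this follows from Proposition~\ref{prop:PropertiesOfCinftyinfty}(2) together with dominated convergence applied to difference quotients, which are uniformly controlled by a left-invariant derivative of $\alpha_\phi$ that again lies in $A$ and is bounded on $\supp\mu$. Hence any $\beta\in C^n(\mathfrak{g};A)$ may be regarded in $C^n(\mathfrak{g};L)$. By Lemma~\ref{lem:link-amenable}, the hypothesis $\overline{\beta_\psi}=0$ reads $\int\beta(\lambda)\ud\mu=0$ for every $\lambda\in\wedge^n\mathfrak{g}$, so $\beta(\lambda)\in L_0$ and $\beta\in C^n(\mathfrak{g};L_0)$. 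Applying the vanishing of $\overline{H}^n(\mathfrak{g};L_0)$ then supplies a sequence $\eta_i\in C^{n-1}(\mathfrak{g};L_0)$ with $\mathrm{d}\eta_i\to\beta$ in $C^n(\mathfrak{g};L_0)$, which unpacks exactly to the stated convergence $\mathrm{d}\eta_i(\lambda)\to\beta(\lambda)$ in $L$ for each $\lambda$. The main obstacle I anticipate is reconciling the $C^\infty$--topology we placed on $L$ via orbit maps $\eta_v$ with the Fréchet topology on smooth vectors used in Blanc's comparison between Lie algebra and differentiable group cochains; however, these two topologies coincide by the standard characterization of smooth vectors of a continuous Hilbert $G$--representation, so this point is technical rather than substantive.
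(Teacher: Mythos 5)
Your overall strategy is the same as the paper's: ergodicity of $\mu$ gives $L^2_0(\mu)^G=0$, Blanc's vanishing theorem applies, and van Est transfers the statement to Lie algebra cohomology; the ``in particular'' deduction ($\overline{\beta_\psi}=0\Rightarrow\beta(\lambda)\in L_0$ via Lemma~\ref{lem:link-amenable}) also matches. However, there is a genuine gap at the central step. Blanc's theorem, in the form actually available (and as cited in the paper), gives $\overline{H}^*_s(G;L^2_0(\mu))=0$, i.e.\ vanishing of reduced cohomology with coefficients in the \emph{Hilbert space} $L^2_0(\mu)$, with closures taken in the topology of $L^2$--valued cochains. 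What the lemma needs is $\overline{H}^*(\mathfrak{g};L_0)=0$, where the closure of coboundaries is taken in the much finer Fr\'echet topology on the smooth vectors $L_0$ (convergence of all derivatives of the orbit maps). You assert in one sentence that ``for amenable $G$ and a continuous unitary representation $V$ with $V^G=0$, the reduced continuous cohomology $\overline{H}^n_{\mathrm{ct}}(G;V^\infty)$ vanishes,'' but this is precisely the nontrivial upgrade, not a quotable theorem: the approximating coboundaries $\mathrm{d}h_n\to S\beta$ produced by Blanc are only $L^2_0(\mu)$--valued and only converge in the $L^2$ sense, and the derivative operators $v\mapsto Xv$ are unbounded on $L^2(\mu)$, so $L^2$--convergence of $\mathrm{d}h_n$ says nothing about convergence in $L_0$. (As an aside, the ``amenable'' in your statement is also too broad; Blanc's all-degrees vanishing is special to nilpotent, or at least solvable, groups.)

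The paper closes this gap with an explicit mollification argument: it convolves the cochains $h_n\in C^{d-1}_s(G;L^2_0(\mu))$ with an approximate identity $k_i$ to produce $h_n\ast k_i\in C^{d-1}_s(G;L_0)$, checks that $v\mapsto v\ast k$ is a \emph{continuous} linear map from $L^2_0(\mu)$ into $L_0$ (so that $\mathrm{d}[h_n]\ast k_i\to S\beta\ast k_i$ in the $C^\infty$ topology as $n\to\infty$), and then runs a diagonal argument over $n$ and $i$ using $S\beta\ast k_i\to S\beta$, before transporting back through the van Est map $R$ and a homotopy $\mathrm{d}\gamma=\beta-RS\beta$. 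The topological issue you flag at the end of your write-up (whether the orbit-map $C^\infty$ topology on $L$ agrees with the Fr\'echet topology on smooth vectors) is indeed harmless, but it is not the issue; the missing content is the smoothing of the approximate primitives, and without it the first assertion of the lemma is not established.
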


We first recall some definitions. {We will say that a $G$--module $E$ is \textit{complete} if $E$ is a Fréchet space.}
For any {complete} $G$--module {$E$} , and any $n\ge 0$, let
\begin{equation}\label{eqn:Fn}
  F^n_s(G;{E}):=\{f:G^{n+1}\to {E}:\text{$f$ is smooth}\}.
\end{equation}
{Here we recall that when we say that $f$ \textit{is smooth} we mean that all Fréchet derivatives of $f$ exist, and they are continuous}. Let $\mathrm{d}=\mathrm{d}_n\from F^n_s(G;{E})\to F^{n+1}_s(G;{E})$ be the coboundary operator
$$
\mathrm{d}f(g_0,\dots, g_{n+1}) = \sum_{i=0}^{n+1} (-1)^i f(g_0,\dots,\hat{g}_i,\dots, g_{n+1}),
$$
for $f\in F^n_s(G;{E})$. 
Let $C_s^n(G;{E})$ be the set of $G$--equivariant elements of $F^n_s(G;{E})$, i.e., those that satisfy 
\[
    f(h g_0, \ldots, hg_n):= h \cdot\left(f(g_0,\ldots,g_n)\right), \quad \forall h,g_0,\ldots,g_n \in G.
\]
We equip $C^n_s(G;{E})$ with the topology of $C^\infty$ convergence on compact subsets of $G^{n+1}$, and we define the \emph{smooth cohomology} $H^*_s(G;{E})$ of $G$ to be the cohomology of $C_s^*(G;{E})$.

Blanc {showed that the reduced cohomology $\overline{H}_s^*(G;E)$ vanishes when $E$ is a unitary representation.}
\begin{thm}[\cite{Blanc}]\label{thm:blanc}
  Let $G$ be a connected nilpotent Lie group and 
  let $U$ be a unitary representation of $G$ with no nontrivial fixed points, i.e., if $v\in U$ and $g\cdot v = v$ for all $g$, then $v=0$. Let
  $$\overline{H}^k_s(G;U) = \ker \mathrm{d}_k /\clos(\im \mathrm{d}_{k-1}).$$  
  Then $\overline{H}^*_s(G;U) = 0$.
\end{thm}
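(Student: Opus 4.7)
The plan is to argue by induction on $\dim G$, using the fact that a connected nilpotent Lie group has a nontrivial center. At each step, I would pick a central one-parameter subgroup $Z \cong \mathbb{R} \hookrightarrow Z(G)$, which gives a central extension
\[
1 \to Z \to G \to G/Z \to 1,
\]
reducing the dimension. The base case is $G = \mathbb{R}$, treated directly by spectral theory.

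For the base case $G = \mathbb{R}$, I would identify smooth group cohomology with Lie algebra cohomology on smooth vectors via a van Est comparison, so that $H^k_s(\mathbb{R};U) \cong H^k(\mathbb{R}; U^\infty)$. Degrees $k \ge 2$ vanish trivially because $\wedge^k \mathbb{R} = 0$. In degree $0$, the no-fixed-vectors hypothesis gives $U^{\mathbb{R}} = 0$. In degree $1$, cocycles are identified with $U^\infty$ and coboundaries with the image of the infinitesimal generator $X$; writing $U = \int^\oplus U_\lambda \, \mathrm{d}\mu(\lambda)$ by the spectral theorem, $X$ acts as multiplication by $i\lambda$, and the no-fixed-vector hypothesis translates to $\mu(\{0\})=0$, so $X$ has dense image in $U$. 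A mollification by convolution with functions in $C^\infty_c(\mathbb R)$ commutes with $X$ and preserves the smooth-vector topology, which promotes Hilbert-density to density in the Fr\'echet topology of $U^\infty$ and yields $\overline{H}^1_s(\mathbb{R}; U) = 0$.

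For the inductive step, I would split $U$ according to the action of $Z$. Since $Z$ is central, the subspace $U^Z$ of $Z$-fixed vectors is $G$-invariant, giving a $G$-equivariant orthogonal decomposition $U = U^Z \oplus (U^Z)^{\perp}$, and it suffices to treat each summand separately. On $U^Z$, the subgroup $Z$ acts trivially, so $U^Z$ is a unitary $G/Z$-representation; the no-fixed-vector hypothesis on $U$ forces $(U^Z)^{G/Z} = 0$, so by the inductive hypothesis $\overline{H}^*_s(G/Z; U^Z)=0$, which lifts to $G$ through inflation. On $(U^Z)^{\perp}$, the central subgroup $Z$ acts without nontrivial fixed vectors, so the base case gives $\overline{H}^*_s(Z; (U^Z)^{\perp})=0$; I would feed this into a Hochschild--Serre spectral sequence
\[
E_2^{p,q} = H^p_s\bigl(G/Z;\, H^q_s(Z;(U^Z)^{\perp})\bigr) \Rightarrow H^{p+q}_s\bigl(G;(U^Z)^{\perp}\bigr)
\]
to conclude $\overline{H}^*_s(G;(U^Z)^{\perp})=0$, and combining with the $U^Z$ piece gives $\overline{H}^*_s(G;U)=0$.

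The main obstacle will be to run the Hochschild--Serre argument at the level of \emph{reduced} cohomology, since closures of coboundary images do not generally commute with the filtrations of a spectral sequence. I would handle this by realizing $C^*_s(G;U)$ as the total complex of an explicit double complex of $G/Z$--equivariant smooth cochains on $(G/Z)^{p+1} \times Z^{q+1}$, and then using the density of $Z$--coboundaries slice by slice to approximate a global smooth cocycle on $G$ by a global coboundary. A secondary technicality is transferring Hilbert-space density to smooth-vector density throughout the induction, which I would resolve uniformly by the mollification argument used in the base case, applied with test functions on the full group $G$.
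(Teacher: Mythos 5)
The paper does not actually prove this statement: it is imported verbatim from Blanc, with only a remark that the continuous-cohomology version in \cite{Blanc} transfers to smooth cohomology by a convolution argument from Guichardet. So there is no in-paper argument to match your proposal against; it has to stand on its own. Its overall architecture (induction on a central one-parameter subgroup, base case $\mathbb{R}$ via the spectral theorem, splitting $U=U^Z\oplus (U^Z)^\perp$) is indeed the standard route and essentially the shape of Blanc's original argument, and your base case is sound modulo the usual mollification bookkeeping.

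There are, however, two genuine gaps. First, the statement that $\overline{H}^*_s(G/Z;U^Z)=0$ ``lifts to $G$ through inflation'' is not valid: inflation is not surjective, so vanishing upstairs does not follow from vanishing downstairs. (The summand $U^Z$ can still be handled, but only by the same fibration argument as the other summand, using that $H^q_s(Z;U^Z)\cong U^Z$ for $q=0,1$ and then invoking the inductive hypothesis in each row.) Second, and more seriously, the Hochschild--Serre step is exactly the content of the theorem and is not established by what you wrote. The $E_2$ page you display uses the \emph{unreduced} groups $H^q_s(Z;(U^Z)^\perp)$, and these do not vanish: for $Z\cong\mathbb{R}$ acting without fixed vectors, the generator $X$ has dense but in general non-closed image, so $H^1_s(Z;(U^Z)^\perp)$ can be nonzero even though $\overline{H}^1_s(Z;(U^Z)^\perp)=0$ (the regular representation of $\mathbb{R}$ already exhibits this). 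Hence the unreduced spectral sequence does not give vanishing, and there is no reduced Hochschild--Serre spectral sequence in general, precisely because $\clos(\im \mathrm{d})$ does not interact well with the filtration. Your proposed fix --- choosing approximate $Z$-primitives ``slice by slice'' --- is where the real work lives: the primitives must be chosen $G/Z$-equivariantly, smoothly in the transverse directions, and with quantitative control in the $C^\infty$ topology, and one must then iterate to absorb the resulting error terms in higher filtration degrees. As written this is a restatement of the difficulty rather than a resolution of it, so the proof is incomplete at its central step. (A minor further point: for a connected but not simply connected nilpotent group the center may contain a torus rather than a copy of $\mathbb{R}$; that case is easier, by averaging, but should be addressed if the theorem is to be proved as stated.)
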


Blanc proved this theorem for $\overline{H}^*_c(G;U)$, the reduced continuous cohomology of $G$, but by a convolution argument (see \cite[Equation (1.16) and Proposition 1.7(i), page 183]{Guichardet}), there is a topological isomorphism between $\overline{H}^*_c(G;U)$ and $\overline{H}^*_s(G;U)$. The analogous statement when $G$ is a discrete group is Theorem~4.1.3 of \cite{Shalom}.

{We will say that a complete $G$-module {$E$} is $C^\infty$ if the map $\eta_v(g)=g\cdot v$ is smooth for every $v\in {E}$. In this case, ${E}$ can be equipped with the $\mathfrak{g}$--module structure given by \eqref{eq:module-structures}, cf. \cite[Section D.4.1]{Guichardet}. For instance, we can make $L_0$ (and analogously $L$) a complete $C^\infty$-module by equipping it with the $C^\infty$ topology induced by the inclusion $L_0\subset C^\infty(G,L^2_0(\mu))$, $v\mapsto \eta_v$ cf. \cite[Page 347]{Guichardet}. This topology makes $L_0$ into a Fr\'echet space.} {The basic terminology and exposition about $G$-modules, which we are using in this paper, is in Appendices D.3, D.4, E.1, and E.4 in \cite{Guichardet}.} Van Est gave an isomorphism between $H_s^*(G;{E})$ and $H^*(\mathfrak{g};{E})$. (See \cite[Section 7.3]{Guichardet} for additional details on Theorem \ref{thm:vanest}.)
\begin{thm}[{\cite[Theorem 14.1]{vanEstResultPrecise}, \cite[Corollaire 7.2]{Guichardet}}]\label{thm:vanest}
  {Let ${E}$ be a complete $C^\infty$-module}. For all $n$, let $R\from C_s^*(G;{E}) \to C^*(\mathfrak{g};{E})$ be the chain map such that for $f\in C_s^n(G;{E})$,
  $$
  Rf(X_1,\dots, X_n) := \sum_{\sigma\in S_n} \operatorname{sgn}(\sigma)X_{\sigma_1,2}\dots X_{\sigma_n,n+1}f(\id_G,\dots,\id_G),
  $$
  where $X_{i,j}f$ denotes the derivative along $X_i$ in the $j$th argument of $f$.
  
  Then $R$ induces an isomorphism from $H_s^*(G;{E})$ to $H^*(\mathfrak{g};{E})$.
\end{thm}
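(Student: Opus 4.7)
The plan is to prove the van Est isomorphism by the classical double-complex argument, interpolating between the bar complex $C_s^*(G;M)$ and the Chevalley--Eilenberg complex $C^*(\mathfrak{g};M)$ through smooth $M$-valued differential forms on $G$.

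First, I would check directly that the antisymmetrization $R$ is a chain map, i.e.\ $R\circ \mathrm{d}_{\mathrm{bar}} = \mathrm{d}_{\mathrm{CE}}\circ R$. Using $G$-equivariance of $f\in C_s^n(G;M)$, a derivative $X_{i,j}$ acting on the $j$-th slot of $f(g_0,\dots,g_n)$ at $(\id_G,\dots,\id_G)$ can be redistributed to neighboring slots modulo a correction coming from the $\mathfrak{g}$-action on $M$ defined in \eqref{eq:module-structures}. Combined with the alternating sum over $\sigma\in S_n$, this produces exactly the Chevalley--Eilenberg differential \eqref{eqn:ExteriorDifferential}, including the bracket terms that appear from commuting derivatives past the group action.

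Second, I would introduce the double complex
$$
K^{p,q} := \bigl\{\,\text{smooth $G$-equivariant maps } G^{p+1} \to \Omega^q(G;M)\,\bigr\},
$$
where $G$ acts diagonally on $G^{p+1}$ and by pullback on $\Omega^q(G;M)$, equipped with the bar differential $\mathrm{d}_b$ in the $p$-direction and the de Rham differential $\mathrm{d}_{dR}$ in the $q$-direction. I would then analyze the two associated spectral sequences. Filtering by $p$: since $G$ is contractible (a simply connected nilpotent Lie group is diffeomorphic to $\mathbb{R}^{\dim G}$ via $\exp$), the Poincar\'e lemma collapses the $\mathrm{d}_{dR}$-cohomology to $M$ in degree $0$, so the $E_2$-page is $H_s^*(G;M)$. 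Filtering by $q$: the coefficient module $\Omega^q(G;M)\cong C^\infty(G;\wedge^q\mathfrak{g}^*\otimes M)$ is acyclic for smooth group cohomology by a Shapiro/convolution argument, and its $G$-invariants are canonically isomorphic to $C^q(\mathfrak{g};M)$ via evaluation at $\id_G$, so the $E_2$-page is $H^*(\mathfrak{g};M)$. Both spectral sequences converge to the same total cohomology of $K^{*,*}$, producing an abstract isomorphism $H_s^*(G;M) \cong H^*(\mathfrak{g};M)$. To identify this abstract zig-zag isomorphism with the explicit formula for $R$, I would trace a representative cocycle $f\in C_s^n(G;M)$ along the antidiagonal of $K^{*,*}$, using at each step the contracting homotopies implementing the Poincar\'e lemma (integration along straight lines from $\id_G$ in the exponential chart) and bar acyclicity (substitution of $\id_G$ into one of the slots). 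Each contraction contributes a directional derivative at $\id_G$; summing over the orders in which the contractions can be applied recovers precisely the antisymmetrized formula defining $R$.

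The main obstacle is the smooth Shapiro-type acyclicity needed to collapse the $q$-filtration: one must produce a contracting homotopy for $C_s^*(G;C^\infty(G;V))$ that remains jointly smooth in all variables and respects the $\mathfrak{g}$-module structure on $M$ given by \eqref{eq:module-structures}. This is especially delicate when $M$ is built from an abstract unitary representation (as in the intended application $M=L=(L^2(\mu))^\infty$ used in Lemma~\ref{lem:vanishing2}) rather than from honest functions on $G$, because the homotopy must be realized throughout on the subspace of smooth vectors and its smoothness must be verified with respect to the topology on $L$ described below \eqref{eqn:L2infty}. A secondary bookkeeping issue is matching the signs and combinatorial factors produced by the zig-zag to the particular antisymmetrization convention chosen for $R$, which is known to differ by normalizations across the references \cite{vanEstResultPrecise} and \cite{Guichardet}.
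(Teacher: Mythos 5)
The paper does not prove Theorem~\ref{thm:vanest} at all: it is imported as a black box from van Est \cite{vanEstResultPrecise} and Guichardet \cite{Guichardet} (with a pointer to Section~7.3 of the latter for details), so there is no in-paper argument to compare yours against. Your sketch is the classical double-complex proof, which is essentially the argument in the cited sources: run both spectral sequences of a bicomplex of equivariant $M$-valued forms on $G^{p+1}$, collapsing one direction by the Poincar\'e lemma (valid because a simply connected nilpotent $G$ is diffeomorphic to $\R^{\dim G}$ via $\exp$) and the other by acyclicity of the smoothly coinduced modules $C^\infty(G;V)$, then identify the edge map with the antisymmetrized differentiation-at-the-identity formula. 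The outline is correct, and the two caveats you flag are precisely where the cited proofs do their work: the Shapiro-type contracting homotopy (inserting the evaluation point of $G$ as an extra group variable) is jointly smooth and is carried out by Guichardet for quasi-complete locally convex coefficient modules of differentiable vectors, which covers the modules $M=L=(L^2(\mu))^\infty$ and $M=L_0$ with the topology described below \eqref{eqn:L2infty}; and the absence of a $1/n!$ normalization in $R$ must be matched against the convention in \eqref{eqn:ExteriorDifferential} when checking the chain-map identity. Neither point is a gap in principle, but a self-contained writeup would have to supply them; for the purposes of this paper it suffices, as the authors do, to cite the result.
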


We use these results to prove Lemma~\ref{lem:vanishing2}.
\begin{proof}[Proof of Lemma~\ref{lem:vanishing2}]
In the following, we equip $L^2(\mu)$ and $L^2_0(\mu)$ with the topology induced by the $L^2$ norm, and we equip $L_0$ with the $C^\infty$ topology induced by the inclusion $L_0\subset C^\infty(G,L^2_0(\mu))$, $v\mapsto \eta_v$. {When $V$ is a Hilbert space}, we view $C^d_s(G;V)$ as a space of functions from $G^{d+1}$ to $V$ and equip it with the $C^\infty$ topology.

For $k$ a compactly supported smooth function on $G$, $V$ a Hilbert space, and $f\from G\to V$, let
$$
(f\ast k)(x) := \int_G f(x g) k(g^{-1})\ud g,
$$
for any $x\in G$. For $v\in L^2_0(\mu)$, let 
$v\ast k \in L^2_0(\mu)$,
$$v\ast k := \int_G (g\cdot v) k(g^{-1}) \ud g,$$
so that for $x\in G$,
$$\eta_{v\ast k}(x) = x \cdot \int_G (g\cdot v) k(g^{-1}) \ud g = \int_G (x g\cdot v) k(g^{-1})\ud g = (\eta_v \ast k)(x).$$
Then for $h\in G$ and $X\in \mathfrak{g}$,
\begin{align*}
X \eta_{v\ast k}(h) &= \frac{\mathrm{d}}{\mathrm{d} t}\bigg|_{t=0} \int_G (h \exp(tX) g\cdot v) k(g^{-1}) \ud g \\
&= \frac{\mathrm{d}}{\mathrm{d} t}\bigg|_{t=0} \int_G (h g\cdot v) k(g^{-1}\exp(tX)) \ud g =  \eta_{v\ast Xk}(h).
\end{align*}
Since $k$ is smooth, $v\ast k\in L_0$. Furthermore, if $v_i, w\in L^2_0(\mu)$, and $\|v_i-w\|_{L^2(\mu)}\to 0$, then $\eta_{v_i \ast k}$ converges to $\eta_{w \ast k}$ in the $C^{\infty}$ topology. That is, for any $k$, the map $v\mapsto v\ast k$ is a continuous linear map from $L^2_0(\mu)$ to $L_0$.

For $k$ as above and $\sigma\in C^d_s(G;L^2_0(\mu))$, we let $\sigma\ast k\in C^d_s(G;L_0)$,
  $$
  (\sigma\ast k)(g_0,\dots,g_d) := \sigma(g_0,\dots,g_d)\ast k,
  $$
  for all $g_0,\dots, g_d\in G$. This satisfies
  $$\mathrm{d}[{\sigma}\ast k] = \mathrm{d}[{\sigma}]\ast k,$$
  and $\sigma\mapsto \sigma \ast k$ is a continuous linear map from $C_s^d(G;L^2_0(\mu))$ to $C_s^d(G;L_0)$.

  Now we prove the lemma.
  Let $R\from C_s^*(G;L_0)\to C^*(\mathfrak{g};L_0)$ be the chain map from Theorem~\ref{thm:vanest}. Note that $R$ is continuous.
  Let $S\from C^*(\mathfrak{g};L_0)\to C_s^*(G;L_0)$ be a homotopy inverse of $R$. Let $\beta\in C^d(\mathfrak{g};L_0)$ such that $\mathrm{d}\beta=0$   and let $\gamma\in C^{d-1}(\mathfrak{g};L_0)$ be such that $\mathrm{d} \gamma = \beta - RS\beta$.

  Since $\mu$ is ergodic, $L^2_0(\mu)$ satisfies the assumption of Theorem~\ref{thm:blanc}. Therefore, there are $h_1,h_2,\dots \in C^{d-1}_s(G;L^2_0(\mu))$ such that $\mathrm{d}h_n$ converges to $S\beta$ in $C^{d}_s(G;L^2_0(\mu)).$

  Let $k_1,k_2,\dots\from G\to \R$ be smooth nonnegative functions such that $\supp k_i\subset B_{\frac{1}{i}}(\id_G)$ and $\int_G k_i(x)\ud x=1$ for all $i$, so that $\lim_i c \ast k_i = c$ for any $c \in C^{d}_s(G;L_0)$.

  Then $\mathrm{d}[h_n\ast k_i] = \mathrm{d}[h_n]\ast k_i$, and since the convolution by $k_i$ is continuous,
  $$\lim_n \mathrm{d}[h_n]\ast k_i = S\beta \ast k_i$$
  as elements of $C^{d}_s(G;L_0)$. Since $\lim_i S\beta \ast k_i = S\beta$, we can choose a sequence $n_i\ge 0$ such that
  $$\lim_i \mathrm{d}[h_{n_i}\ast k_i] = S\beta.$$
  Let $\hat{h}_i := h_{n_i}\ast k_i$. By the continuity of $R$,
  $$\lim_{i\to \infty} \ud[\gamma + R \hat{h}_i] = (\beta - RS\beta) + RS\beta = \beta,$$
  so $\overline{H}^d(\mathfrak{g};L_0)=0$.

  Finally, if $\beta\in C^n(\mathfrak{g};A)$ is a cocycle and $\overline{\beta_\psi}=0$, then by Lemma~\ref{lem:link-amenable}, $\int V_\beta \ud \mu = \overline{\beta_\psi}=0$. In particular, for any $\lambda \in \wedge^n\mathfrak{g}$, 
  $$\int \beta(\lambda)(\phi) \ud \mu(\phi) = \int V_\beta(\phi)(\lambda) \ud \mu = \overline{\beta_\psi}(\lambda) = 0,$$
  so $\beta(\lambda)\in L_0$ and thus $\beta\in C^n(\mathfrak{g};L_0)$. Therefore, the reduced cohomology class of $\beta$ is zero, as desired.
\end{proof}

\begin{remark}\label{rem:A0L0}
  Let $A_0 := \{f\in A\mid \int_{Y_0} f\ud \mu = 0\}$. Then Lemma~\ref{lem:vanishing2} shows that any cocycle $\beta\in C^d(\mathfrak{g};A_0)$ vanishes in $\overline{H}^d(\mathfrak{g};L_0)$; it is unclear whether $\beta$ also vanishes in $\overline{H}^d(\mathfrak{g};A_0)$, and the relationship between $A_0$ and $L_0$ is subtle.

  On one hand, we can relate the $L^2(\mu)$ norm to a norm on certain functions on $G$.  For $\gamma\from G\to \C$, we define the \emph{amenable norm} of $\gamma$ by 
  \begin{equation}\label{eq:def-amen-norm}
    \|\gamma\|_M := \sqrt{\lim_{n\to \infty} \fint_{B_n} |\gamma(g)|^2 \ud g},
  \end{equation}
  if the limit exists. If $\beta\in A$ and $\psi\in Y_0$ is an ergodic map with limit measure $\mu$, then
  $$\|\beta_\psi\|_M = \sqrt{\lim_{n\to \infty} \fint_{B_n} |\beta(\psi \cdot g)|^2 \ud g} = \sqrt{\int |\beta(y)|^2 \ud \mu(y)} = \|\beta\|_{L^2(\mu)}.$$
  Thus, if $\beta \in A$ and $\|\beta\|_{L^2(\mu)}$ is small, then $\beta_\psi$ has small amenable norm.

  By Lusin's Theorem and Proposition~\ref{prop:PropertiesOfCinftyinfty}.(3), any $f\in L$ is the limit in $L^2(\mu)$ of a sequence of functions $\alpha_i\in A$; when $i$ and $j$ are large, $\|\alpha_i-\alpha_j\|_{L^2(\mu)}$ is small, so $\|(\alpha_i)_\psi-(\alpha_j)_\psi\|_M$ is small.

  On the other hand, while we can approximate any $f\in L$ in the $L^2(\mu)$ norm by functions $\alpha_i\in A$, it is unclear whether we can approximate $f$ in the topology on $L$. In particular, for $X\in \mathfrak{g}$, the derivative operator $h\mapsto X[h]$ from $L$ to $L$ is unbounded in the $L^2(\mu)$ norm, so it is not clear whether we can choose the $\alpha_i$'s such that the derivatives $X[\alpha_i]$ also converge in $L^2(\mu)$. It is possible that $L$ is substantially larger than $A$, but we have not been able to construct explicit elements of $L$ that do not lie in the closure of $A$.
\end{remark}

\section{Amenable averages and cohomology}\label{sec:Integration}

In this section, we will use the maps constructed in the previous sections to prove the first part of Theorem~\ref{thm:induced-map}. We will first show that if $\psi\in Y_0$ is an ergodic map, then the map ${\overline{\psi^*}}\from H^*(\mathfrak{h})\to H^*(\mathfrak{g})$ defined by \eqref{eqn:ContinuousInvariant} induces a homomorphism of cohomology algebras over $\mathbb C$. Since ${\overline{\psi^*}}$ descends to a map ${\overline{\psi^*}}\from H^*(\mathfrak{h};\mathbb R)\to H^*(\mathfrak{g};\mathbb R)$, the first part of Theorem~\ref{thm:induced-map} follows.

We first write ${\overline{\psi^*}}$ in terms of a projection from $L$ to $\C$.
As in the previous sections, let $\psi\in Y_0$ be an ergodic map, let $\mu$ be its limit measure, and let $L=(L^2(\mu))^\infty$ defined in \eqref{eqn:L2infty}. Let $M\from L\to \C$,
$$
M f := \int_{Y_0} f\ud \mu.
$$
Since $\mu$ is $G$--invariant, $M$ is a continuous $G$--module homomorphism with respect to the trivial $G$--action on $\C$. Therefore, $M$ induces maps from $C^*(\mathfrak{g};L)$ to $C^*(\mathfrak{g})$, and from $H^*(\mathfrak{g};L)$ to $H^*(\mathfrak{g})$. We denote these maps by $M$ as well. 

When $f\in A$, Lemma~\ref{lem:link-amenable} lets us write $Mf$ in terms of the amenable average:
$$
Mf = \int_{Y_0} f \ud \mu  = \lim_{n\to\infty}\fint_{B_n}f_\psi\ud g = \overline{f_\psi}.
$$
Likewise, when $\alpha\in C^d(\mathfrak{g};A)$ and $\lambda \in \wedge^d \mathfrak{g}$,
\begin{equation}\label{eqn:DefinitionOfM}
M\alpha(\lambda) =  \int_{Y_0} \alpha(\lambda) \ud \mu = \lim_{n\to\infty}\fint_{B_n}\alpha(\lambda)_\psi\ud g \stackrel{\eqref{eq:def-amen-avg}}{=} \overline{\alpha_\psi}(\lambda).
\end{equation}

\begin{lemma}\label{lemma:LimitExists}
  Let $T^\sharp$ be the map defined in Lemma~\ref{lem:Tsharp}. For every left-invariant form $\omega \in C^d(\mathfrak{h})$, the amenable average 
  $$
  \overline{\psi^*}\omega:=\lim_{n\to \infty} \fint_{B_n} (\psi^*\omega)_g \ud g$$ 
  exists and satisfies $\overline{\psi^*}\omega=(M\circ T^\sharp)(\omega)$.
\end{lemma}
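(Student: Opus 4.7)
The plan is to recognize Lemma~\ref{lemma:LimitExists} as essentially an immediate synthesis of Lemma~\ref{lem:Tsharp} and Lemma~\ref{lem:link-amenable}, together with the definition of $M$ given in \eqref{eqn:DefinitionOfM}.

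First, I would apply Lemma~\ref{lem:Tsharp} with $\beta := T^\sharp\omega \in C^d(\mathfrak{g};A)$; this yields the crucial identity $\beta_\psi = (T^\sharp\omega)_\psi = \psi^*\omega$ as smooth bounded $d$--forms on $G$. In particular, for every $\lambda \in \wedge^d \mathfrak{g}$ and $g\in G$, the value $(\psi^*\omega)_g(\lambda)$ equals $\beta(\lambda)_\psi(g)$, which by \eqref{eqn:DefAlphay} is $\beta(\lambda)(\psi\cdot g)$. This rewrites the averages in the statement as averages of a fixed continuous function along the orbit of $\psi$.

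Next, I would invoke Lemma~\ref{lem:link-amenable} applied to $\beta = T^\sharp\omega$: since $\psi$ is ergodic with limit measure $\mu$, the measures $\mu_n$ of Definition~\ref{def:ErgodicMap} converge weakly to $\mu$, and because all these measures are supported on the compact set $\clos(\psi\cdot G)$, weak convergence against the (continuous but not necessarily compactly supported) function $V_\beta\from Y_0\to \wedge^d\mathfrak{g}^*$ still holds — a standard cutoff argument reduces to testing against $C_c(Y_0)$ functions that equal $V_\beta$ on the compact support. This gives existence of $\overline{\psi^*\omega}$ together with the identity $\overline{\psi^*\omega} = \int V_{T^\sharp\omega} \ud \mu$ in $\wedge^d\mathfrak{g}^*$.

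Finally, I would identify the right-hand side with $(M\circ T^\sharp)(\omega)$. By the definition of $M$ extended to $C^*(\mathfrak{g};L)$ and by \eqref{eqn:DefinitionOfM}, for every $\lambda\in \wedge^d \mathfrak{g}$ we have
\[
(M T^\sharp \omega)(\lambda) = \int_{Y_0} (T^\sharp\omega)(\lambda) \ud \mu = \int_{Y_0} V_{T^\sharp\omega}(\,\cdot\,)(\lambda) \ud \mu,
\]
which matches the expression just obtained for $\overline{\psi^*\omega}(\lambda)$. There is no real obstacle here; the only point deserving a line of justification is that although $V_\beta$ is a continuous function on $Y_0$ that is not a priori compactly supported, ergodicity of $\psi$ makes every $\mu_n$ supported on the common compact set $\clos(\psi\cdot G)$, so weak convergence $\mu_n\rightharpoonup\mu$ tests $V_\beta$ with no further assumption.
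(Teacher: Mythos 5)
Your proposal is correct and follows essentially the same route as the paper: the paper's proof is exactly the chain $(M\circ T^\sharp)(\omega)(\lambda) = \int_{Y_0}(T^\sharp\omega)(\lambda)\ud\mu = \lim_n \fint_{B_n}(\psi^*\omega)_g(\lambda)\ud g$ via \eqref{eqn:DefinitionOfM} (which rests on Lemma~\ref{lem:link-amenable}) and \eqref{eqn:ImportantEquality}. Your extra remark about testing weak convergence against the non-compactly-supported $V_\beta$ by cutting off near the compact set $\clos(\psi\cdot G)$ is a legitimate point that the paper leaves implicit in Lemma~\ref{lem:link-amenable}, and you handle it correctly.
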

\begin{proof}
  Let $\omega\in C^d(\mathfrak{h})$. Then $M(T^\sharp\omega)$ exists and for any $\lambda\in \wedge^d\mathfrak{g}$,
  \[
    M(T^\sharp\omega)(\lambda) \stackrel{\eqref{eqn:DefinitionOfM}}{=} \lim_{n\to\infty}\fint_{B_n} T^\sharp\omega(\lambda)_\psi\ud g \stackrel{\eqref{eqn:ImportantEquality}, \eqref{eq:def-form-eval}}{=} \lim_{n\to\infty}\fint_{B_n}(\psi^*\omega)_g(\lambda) \ud g,
  \]
  which proves the lemma.
\end{proof}
Let $\overline{\psi^*} := M\circ T^\sharp \from C^*(\mathfrak{h})\to C^*(\mathfrak{g})$. By  Lemma~\ref{lemma:LimitExists}, this satisfies \eqref{eqn:ContinuousInvariant}.
Since $T^\sharp$ is a homomorphism of differential graded algebras, $T^\sharp$ induces a homomorphism of cohomology algebras. We claim that $M$ also induces a homomorphism of cohomology algebras, which would imply that $\overline{\psi^*}$ does as well. For $u\in C^*(\mathfrak{g};A)$, let $[u]\in H^*(\mathfrak{g};A)$ denote the corresponding cohomology class. 

\begin{lemma}\label{lem:McommutesCupProduct}
  The map $M$ induces a homomorphism of cohomology algebras from $H^*(\mathfrak{g};A)$ to $H^*(\mathfrak{g})$. That is, for every $[u_1],[u_2]\in H^*(\mathfrak{g};A)$,
  \begin{equation}
    [Mu_1 \wedge Mu_2] = [M(u_1\wedge u_2)], \qquad \text{in $H^*(\mathfrak{g})$}.
  \end{equation}
\end{lemma}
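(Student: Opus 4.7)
The plan is to reduce the claim to an application of the vanishing Lemma~\ref{lem:vanishing2}, using that $M$ is a continuous $\mathfrak{g}$-module homomorphism to $\C$ (with trivial action) and therefore commutes with the differential. Since $M$ is linear and both $[Mu_1\wedge Mu_2]$ and $[M(u_1\wedge u_2)]$ depend only on $[u_1],[u_2]$, it suffices to produce, for chosen cocycle representatives $u_1,u_2\in C^*(\mathfrak{g};A)$, an element $\xi\in C^*(\mathfrak{g};\C)$ with
\[
  M(u_1\wedge u_2) - Mu_1\wedge Mu_2 = \mathrm{d}\xi.
\]

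First I would decompose: write $u_1 = Mu_1 + v_1$, where $Mu_1\in C^*(\mathfrak{g};\C)$ is viewed inside $C^*(\mathfrak{g};A)$ via the inclusion $\C\hookrightarrow A$ as constant functions on $Y_0$. Constants carry trivial $\mathfrak{g}$-action in $A$, so $Mu_1$ remains a cocycle there, and $v_1 := u_1 - Mu_1$ is a cocycle in $C^*(\mathfrak{g};A)$ with $Mv_1 = Mu_1 - M(Mu_1) = 0$ (since $\mu$ is a probability measure). Because $Mu_1(\lambda)\in\C$ for every $\lambda$, these constants pull through the integral $M$, giving $M(Mu_1\wedge u_2) = Mu_1\wedge Mu_2$; hence it remains to show that $M(v_1\wedge u_2)$ is a coboundary in $C^*(\mathfrak{g};\C)$.

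Next I would invoke Lemma~\ref{lem:vanishing2}. Since $v_1$ is a cocycle with $\overline{(v_1)_\psi} = Mv_1 = 0$ by Lemma~\ref{lem:link-amenable}, there is a sequence $\eta_i\in C^{*-1}(\mathfrak{g};L_0)$ with $\mathrm{d}\eta_i\to v_1$ in $L$. Using the continuous bilinear pairing $A\times L\to L$ recorded in the discussion preceding Lemma~\ref{lem:vanishing2}, the wedge products $\eta_i\wedge u_2$ and $\mathrm{d}\eta_i\wedge u_2$ live in $C^*(\mathfrak{g};L)$; the Leibniz rule together with $\mathrm{d} u_2=0$ gives
\[
  \mathrm{d}(\eta_i\wedge u_2) = \mathrm{d}\eta_i\wedge u_2 \xrightarrow{\;i\to\infty\;} v_1\wedge u_2 \quad \text{in } L.
\]
Applying $M$, which commutes with $\mathrm{d}$ by $G$-invariance of $\mu$ and is continuous (in fact continuous already in the coarser $L^2(\mu)$ topology), I obtain $\mathrm{d}[M(\eta_i\wedge u_2)] = M[\mathrm{d}(\eta_i\wedge u_2)]\to M(v_1\wedge u_2)$ pointwise on $\wedge^*\mathfrak{g}$, with each term in the sequence a coboundary in $C^*(\mathfrak{g};\C)$.

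Finally, since $\mathfrak{g}$ is finite-dimensional, so is $C^*(\mathfrak{g};\C)$, and the coboundary subspace is closed. Hence the limit $M(v_1\wedge u_2)$ is itself a coboundary, completing the proof. The main subtlety is the continuity bookkeeping in the third step: one must verify that wedging with the bounded cocycle $u_2$ preserves convergence in $L$, and that the continuous projection $M\colon L\to\C$ then preserves this convergence at the level of $C^*(\mathfrak{g};\C)$. Both facts follow from the module structures already set up in Sections~\ref{sec:Pullbacks} and~\ref{sec:VanishingLemma}, and it is precisely the finite dimensionality of $C^*(\mathfrak{g};\C)$ that converts ``limit of coboundaries'' into ``coboundary'' at the end.
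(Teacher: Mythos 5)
Your proposal is correct and follows essentially the same route as the paper: decompose a factor as (amenable average) plus (zero-average part), apply Lemma~\ref{lem:vanishing2} to write the zero-average cocycle as a limit of coboundaries in $C^*(\mathfrak{g};L_0)$, wedge with the other cocycle using the continuous $A$--$L$ pairing and the Leibniz rule, push forward by $M$, and use finite-dimensionality of $C^*(\mathfrak{g})$ to conclude that a limit of coboundaries is a coboundary. The only cosmetic difference is that you split only the first factor and observe directly that constant coefficients pull through $M$, whereas the paper performs the replacement $\beta\mapsto M\beta$ and then $\alpha\mapsto M\alpha$ in two successive steps; both organizations rely on exactly the same ingredients.
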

\begin{proof}
  Let $L:=(L^2(\mu))^{\infty}$, $L_0 :=L^2_0(\mu)\cap L$, and $A_0:= \{f\in A : \int_{Y_0} f \ud \mu = 0 \}$ as in the previous sections. We can decompose $A=A_0\oplus \C$ and $L=L_0\oplus \C$, where $\C$ is the space of constant functions on $Y_0$; then $M\from L\to \C$ is the projection to $\C$.

  For a module $Z$, let $\overline{B}^k (\mathfrak{g};Z) := \clos( \mathrm{d}^{k-1}) \subset C^{k}(\mathfrak{g};Z))$ be the space of reduced coboundaries with coefficients in $Z$. 
  Let $\alpha\in C^{m}(\mathfrak{g};A)$, $\beta\in C^{n}(\mathfrak{g};A)$ be cocycles. We claim that $M(\alpha) \wedge M(\beta)$ and $\alpha\wedge \beta$ lie in the same reduced cohomology class in $\overline{H}^*(\mathfrak{g};L)$, i.e., 
  \begin{equation}\label{eq:congruence-goal}
    M(\alpha) \wedge M(\beta)\equiv \alpha\wedge \beta \pmod{\overline{B}^* (\mathfrak{g};L)}.
  \end{equation}

  Let $\beta_0 := \beta - M\beta$. Then $\beta_0$ is a cocycle in $C^n(\mathfrak{g};A_0)$. By Lemma~\ref{lem:link-amenable}, and Lemma~\ref{lem:vanishing2}, there is a sequence $(\eta_i)_i \in C^{n-1}(\mathfrak{g};L_0)$ such that {$\lim_i \mathrm{d}\eta_i = \beta_0$}.

  Recall that for $a\in A$ and $b\in L$, we can define a continuous product $ab\in L$ that satisfies the usual product rule \eqref{eq:product-rule}. Thus, for $\alpha\in C^{m}(\mathfrak{g};A)$ and $\eta_i\in C^{n-1}(\mathfrak{g};L_0)$, we can define the wedge product $\alpha\wedge \eta_i$ as in \eqref{eqn:DefnCupProduct}. This is continuous, so 
  $$\lim_{i\to\infty} \mathrm{d}[\alpha\wedge \eta_i] = \lim_{i\to\infty} \mathrm{d}[\alpha]\wedge \eta_i + (-1)^{m} \alpha\wedge \mathrm{d}[\eta_i] = (-1)^{m} \alpha\wedge \beta_0.$$
  That is, $\alpha\wedge (\beta-M\beta) \in \overline{B}^* (\mathfrak{g};L)$, so $\alpha\wedge \beta \equiv \alpha \wedge M(\beta) \pmod{\overline{B}^* (\mathfrak{g};L)}.$

  Applying this argument to $M(\beta)$ and $\alpha$, we find $\alpha\wedge M(\beta) \equiv M(\alpha) \wedge M(\beta)$, so
  \begin{equation}\label{eq:reduced-cohomologous}
    \alpha\wedge \beta \equiv \alpha \wedge M(\beta) \equiv M(\alpha) \wedge M(\beta) \pmod{\overline{B}^* (\mathfrak{g};L)}.
  \end{equation}
  We have $M(\overline{B}^* (\mathfrak{g};L)) \subset \overline{B}^* (\mathfrak{g}) = B^*(\mathfrak{g})$, where the last equality follows from the finite-dimensionality of $C^* (\mathfrak{g})$. Therefore, we can apply $M$ to both sides of \eqref{eq:reduced-cohomologous} to conclude
  $$M(\alpha\wedge \beta) \equiv M(M(\alpha) \wedge M(\beta)) = M(\alpha) \wedge M(\beta) \pmod{B^*(\mathfrak{g})},$$
  as desired.
\end{proof}
This proves the first part of Theorem \ref{thm:induced-map}. 

\section{Quasi-isometries and cohomology}\label{sec:proof}

Finally, we prove that if $\psi\in Y_0$ is an ergodic quasi-isometry, then $\overline{\psi^*}$ is an isomorphism of cohomology algebras. We first show that $\overline{\psi^*}$ preserves the top-dimensional cohomology.

We will need the following lemma. Recall that, for $C> 0$, and $L\geq 1$, $\psi:G\to H$ is an \textit{$(L,C)$--quasi-isometry} if
\begin{equation}\label{eqn:LCQiEmbedding}
L^{-1}d_G(g_1,g_2)-C \leq d_H(\psi(g_1),\psi(g_2))\leq Ld_G(g_1,g_2)+C, \quad \forall g_1,g_2\in G,
\end{equation}
and if for all $h\in H$, there is a $g\in G$ such that $d_H(\psi(g),h)\leq C$. We say that $\psi$ is an \emph{$(L,C)$-quasi-isometric embedding} if we only have \eqref{eqn:LCQiEmbedding}.
\begin{lemma}\label{lem:VolumeControl}
  Let $G$ and $H$ be two simply connected nilpotent Lie groups. Fix orientations on $G$ and $H$ and let $\psi\from G\to H$ be a Lipschitz $(L,C)$--quasi-isometry. Let $\theta$ be the degree of $\psi$, let $\omega$ be the volume form on $H$, and let
  $$
  \tau(R) := \int_{B_R^G} \psi^*\omega \ud g.
  $$
  Then $\theta \in \{-1,1\}$, and there is a $c$ depending on $G$, $H$, $L$, and $C$ such that $\theta \tau(R) \ge c |B_R^G|$ for all sufficiently large $R$.
\end{lemma}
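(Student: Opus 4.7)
The plan has two steps: a standard degree-theoretic argument to pin down $\theta$, followed by a boundary-layer estimate to control $\tau(R)$.

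\textbf{Step 1: $\theta\in\{-1,1\}$.} Since the lower bound of \eqref{eqn:LCQiEmbedding} forces $\psi$ to be proper, its degree $\theta\in\mathbb Z$ is well defined as a proper Lipschitz map between oriented $n$--manifolds. I would produce a Lipschitz quasi-inverse $\bar\psi\from H\to G$ by smoothing a set-theoretic quasi-inverse via convolution with a bump. Then $\bar\psi\circ\psi$ is at uniformly bounded distance from $\id_G$; contractibility of $G$ yields a proper homotopy from $\bar\psi\circ\psi$ to $\id_G$ (straight lines along Riemannian geodesics), so $\deg(\bar\psi\circ\psi)=1$. Multiplicativity of degree under composition forces $\theta\cdot\deg(\bar\psi)=1$, hence $\theta\in\{-1,1\}$.

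\textbf{Step 2: the volume bound.} After a left translation of $H$ (which leaves $\tau(R)$ unchanged since $\omega$ is left invariant) assume $\psi(\id_G)=\id_H$. Setting $R':=L^{-1}R-C$, the upper half of \eqref{eqn:LCQiEmbedding} gives $\psi^{-1}(B_{R'}^H)\subset B_R^G$, so for each regular value $h\in B_{R'}^H$ the entire preimage of $h$ lies in $B_R^G$, and the signed count $N_R(h):=\sum_{g\in\psi^{-1}(h)\cap B_R^G}\sgn(J_\psi(g))$ equals $\theta$. The signed area formula then gives
\[
\tau(R) \;=\; \int_H N_R(h)\,\omega(h) \;=\; \theta\,|B_{R'}^H| \;+\; \int_{\psi(B_R^G)\setminus B_{R'}^H}N_R(h)\,\omega(h).
\]
Since $G$ and $H$ are simply connected nilpotent Lie groups with a common homogeneous dimension $Q$ (a quasi-isometry invariant by Pansu), there is $c_0=c_0(G,H,L,C)>0$ with $|B_{R'}^H|\ge c_0|B_R^G|$ for all sufficiently large $R$.

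The main obstacle will be the remainder. The key observation is that for $h\in\psi(B_R^G)\setminus B_{R'}^H$ with $\psi^{-1}(h)\subset B_R^G$ one has $N_R(h)=\theta$, so those contributions are non-negative after multiplication by $\theta$; the only problematic ``exceptional'' $h$ are those for which $\psi^{-1}(h)$ meets $G\setminus B_R^G$. Coarse injectivity of $\psi$, namely $\diam(\psi^{-1}(h))\le LC$, then confines every preimage of such an exceptional $h$ to the $2LC$--neighborhood of $\partial B_R^G$. Thus the exceptional set lies in $\psi$ of the $2LC$--thick shell around $\partial B_R^G$, which has $G$--volume of order $R^{Q-1}$ since $\vol_{n-1}(\partial B_R^G)\sim R^{Q-1}$. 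Applying the Lipschitz bound $\vol(\psi(S))\le K^n\vol(S)$ with $K$ the Lipschitz constant of $\psi$, and invoking the area formula, both the $H$--measure of the exceptional set and its contribution to $\int|N_R|\omega$ are $O(R^{Q-1})$. Combining, $\theta\tau(R)\ge|B_{R'}^H|-O(R^{Q-1})\ge c_0|B_R^G|-O(R^{Q-1})$, which is at least $\tfrac{c_0}{2}|B_R^G|$ for $R$ large enough, giving the lemma with $c=\tfrac{c_0}{2}$.
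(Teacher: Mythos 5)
Your proposal is correct, and its quantitative half is essentially the paper's own argument: both proofs use the lower bound in \eqref{eqn:LCQiEmbedding} to produce a region ($B^H_{L^{-1}R-C}$ for you, $\psi(B^G_{R-2CL})$ for the paper) on which excision forces the local degree over $B_R^G$ to equal $\theta$, then apply the signed and unsigned area formulas and confine the error term to preimages lying in an $O(1)$--thick shell around $\partial B_R^G$, whose volume $O(R^{Q-1})$ is beaten by the $\gtrsim R^Q$ main term. Working with the ball $B^H_{R'}$ rather than the image $\psi(B^G_{R'})$ is marginally cleaner bookkeeping, and your reduction of the exceptional set via $\diam(\psi^{-1}(h))\le LC$ plays exactly the role of the paper's shell $M=B_R^G\setminus B_{R'}^G$. (Both you and the paper assert $|B_R^G|-|B_{R-a}^G|=O(R^{Q-1})$ without proof; this is a known but not entirely trivial refinement of the volume asymptotics.)

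Where you genuinely diverge is in showing $\theta\in\{-1,1\}$. The paper passes to Pansu's scaling limit: the rescaled maps $\psi_{r_i}$ converge to a bilipschitz homeomorphism $\psi_\infty\from G^\infty\to H^\infty$, and excision plus homotopy invariance identify $\deg(\psi)$ with $\deg(\psi_\infty,B,0)=\pm1$. Your route --- a smoothed proper quasi-inverse $\bar\psi$ plus multiplicativity of the degree --- is more elementary in that it needs no asymptotic cones, but it requires two constructions the paper's argument avoids: a continuous proper quasi-inverse (acceptable; the paper itself invokes the triangulation/convolution smoothing of quasi-isometries in the proof of Theorem~\ref{thm:mainThm}) and a \emph{proper} homotopy from $\bar\psi\circ\psi$ to $\id_G$. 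For the latter, ``straight lines along Riemannian geodesics'' is slightly delicate, since minimizing geodesics between points of a left-invariant metric need not be unique, and the naive straight-line homotopy in exponential coordinates does not obviously have bounded displacement (BCH correction terms grow with the basepoint). The clean choice is $F(x,t)=x\cdot\exp\bigl(t\log(x^{-1}\bar\psi(\psi(x)))\bigr)$: since $x^{-1}\bar\psi(\psi(x))$ ranges over a fixed compact set, $F$ is continuous with uniformly bounded displacement, hence proper. With that fix your argument is complete and buys a proof of $\theta=\pm1$ that does not depend on the bilipschitz convergence $\psi_{r_i}\to\psi_\infty$, at the cost of constructing and smoothing a quasi-inverse.
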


Given this lemma, we can show that  $\overline{\psi^*}([\omega])\ne [0]$.
\begin{cor}\label{cor:volcontrol}
  Let $\psi\in Y_0$ be an ergodic quasi-isometry, and $\omega$ be the volume form on $H$. Let $n$ be the topological dimension of $H$. Then $[\omega]$ is a generator of $H^n(\mathfrak{h};\mathbb R)$, and
  \begin{equation}\label{eqn:WantedEquation}
    \overline{\psi^*}([\omega])\neq [0].
  \end{equation}
\end{cor}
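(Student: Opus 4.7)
The plan is to prove both assertions by evaluating $\overline{\psi^*}\omega$ on the unit-volume $n$-vector of $G$ and invoking Lemma~\ref{lem:VolumeControl}, after first noting that top Lie-algebra cohomology of a nilpotent Lie algebra is one-dimensional.

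First I would observe that, since $\psi$ is a quasi-isometry between simply connected nilpotent Lie groups, Pansu's theorem forces $\dim G = \dim H = n$. For any nilpotent Lie algebra $\mathfrak{g}$ of dimension $n$, the top cochain space $C^n(\mathfrak{g}) = \Hom(\wedge^n\mathfrak{g}, \mathbb{C})$ is one-dimensional, and the Chevalley--Eilenberg differential $d^{n-1}\from C^{n-1}(\mathfrak{g}) \to C^n(\mathfrak{g})$ vanishes by unimodularity of nilpotent Lie algebras. Hence $H^n(\mathfrak{g}) = C^n(\mathfrak{g}) \cong \mathbb{C}$, and likewise $H^n(\mathfrak{h}) = C^n(\mathfrak{h}) \cong \mathbb{C}$. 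Since $\omega$ is a nonzero left-invariant $n$-form on $H$, the class $[\omega]$ generates $H^n(\mathfrak{h})$.

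For the nonvanishing claim, I would fix a left-invariant $n$-vector field $\lambda_G \in \wedge^n\mathfrak{g}$ dual to the Haar measure on $G$, so that $\int_U \eta = \int_U \eta_g(\lambda_G)\ud g$ for every bounded $n$-form $\eta$ and every measurable $U\subset G$. Applying \eqref{eqn:ContinuousInvariant} to $\omega$ and $\lambda_G$, and using that the limit exists by Theorem~\ref{thm:induced-map}, I get
\[
\overline{\psi^*}\omega(\lambda_G) = \lim_{R\to\infty} \fint_{B_R^G} \psi^*\omega(\lambda_G)\ud g = \lim_{R\to\infty} \frac{\tau(R)}{|B_R^G|}.
\]
Lemma~\ref{lem:VolumeControl} supplies $\theta\in\{-1,+1\}$ and $c>0$ such that $\theta\tau(R) \ge c|B_R^G|$ for all sufficiently large $R$, so $|\overline{\psi^*}\omega(\lambda_G)| \ge c > 0$. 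Thus $\overline{\psi^*}\omega$ is a nonzero element of $C^n(\mathfrak{g}) = H^n(\mathfrak{g})$. Since $\overline{\psi^*} = M\circ T^\sharp$ is a chain map by Lemmas~\ref{lem:Tsharp} and~\ref{lem:McommutesCupProduct}, it descends to cohomology and yields $\overline{\psi^*}([\omega]) = [\overline{\psi^*}\omega] \ne [0]$, as required.

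The geometric content of the proof is entirely absorbed into Lemma~\ref{lem:VolumeControl}, so there is no further obstacle. The remaining steps are formal: identifying the amenable average of $\psi^*\omega$ with $\lim \tau(R)/|B_R^G|$, and appealing to unimodularity of $\mathfrak{g}$ and $\mathfrak{h}$ so that a nonzero top cochain automatically represents a nonzero cohomology class.
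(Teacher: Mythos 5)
Your proposal is correct and follows essentially the same route as the paper: evaluate $\overline{\psi^*}\omega$ on the oriented unit $n$--vector, identify the result with $\lim_{R\to\infty}\tau(R)/|B_R^G|$, and apply Lemma~\ref{lem:VolumeControl} to get nonvanishing. The paper's proof is terser (it leaves the unimodularity argument for why a nonzero top cochain represents a nonzero class implicit in the remark that $G$ is $n$--dimensional), but the content is identical.
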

\begin{proof}
  Let $\lambda \in \wedge^n \mathfrak{g}$ be the positively oriented unit $n$--vector. Assume $\theta=1$ in Lemma \ref{lem:VolumeControl}, the other case is analogous. Then
  $$|\overline{\psi^*}(\omega)(\lambda)| = \lim_{R\to \infty} |B^G_R|^{-1} \tau(R)\ge c,$$
  so $\overline{\psi^*}(\omega)\ne 0$, and \eqref{eqn:WantedEquation} follows because $G$ is $n$-dimensional too.
\end{proof}

Before proving Lemma~\ref{lem:VolumeControl}, we recall some facts about the degree of a map and the area formula {which can for instance be found in} \cite[Section IV]{MappingDegreeTheory}, and \cite{Federer1996GeometricTheory}, respectively. We fix two simply connected nilpotent Lie groups $G$ and $H$ of the same topological dimension, and equip them with Riemannian metrics and orientations. We recall that a map $f\from X\to Y$ between topological spaces is \textit{proper} if $f^{-1}(y)$ is compact for every $y\in Y$.

Let $U\subset G$ be an open set with closure $\overline{U}$, let $f\from \overline{U}\subset G\to H$ be a continuous function, and $h\in H$. We say that $(f,U,h)$ is \textit{admissible} if $f$ is proper, and $h\not \in f(\partial U)$. If in addition $f$ is smooth, and $h$ is a regular value for $f$, we define
\begin{equation}\label{eqn:DegDef}
\deg(f,U,h):=\sum_{x\in f^{-1}(h)\cap U} \sgn(Df_x),
\end{equation}
where $\sgn(Df_x) = 1$ if $Df_x$ is orientation-preserving and $-1$ if $Df_x$ is orientation-reversing. Since $f$ is proper and $h$ is a regular value, $f^{-1}(h)$ is a finite set and this is a finite sum.

When $(f,U,h)$ is admissible, but $f$ is not smooth or $h$ is not a regular value, we can still define $\deg(f,U,h)$ by approximating by smooth functions. We let
\[
\deg(f,U,h):=\deg(\tilde f,U,h'),
\]
where $(\tilde f,U,h')$ is admissible, $\tilde f\from \overline{U}\to H$ is smooth, $h'$ is a regular value for $\tilde f$, and 
\[
d_H(h,h') + \sup_{g\in \overline{U}}d_H(f(g),\tilde f(g))\ll \mathrm{dist}_H(h,f(\partial U)).
\]
(In particular, when $U=G$, we require that $\sup_{g\in G} d_H(f(g),\tilde f(g)) < \infty$.)

It is well-known that this definition is independent of the choice of $\tilde f$ and $h'$. In fact, one can show the following, see \cite[Proposition 2.3, page 146 \& Proposition 2.4, page 147 \& Corollary 2.5(3), page 149]{MappingDegreeTheory}.
\begin{prop}\label{prop:degree-props}
  Let $U\subset G$ be an open set and let $f\from \overline{U} \to H$ be a proper map.
  \begin{enumerate}
  \item (Basepoint invariance) If $h$ and $h'$ lie in the same path component of $H\setminus f(\partial(U))$, then $\deg(f,U,h) = \deg(f,U,h')$. Further, if $f\from G\to H$ is proper, then $\deg(f,G,h)$ is independent of $h$, and we define $\deg(f):=\deg(f,G,h)$.
  \item (Homotopy invariance) If $f_t\from \overline{U}\to H, t\in [0,1]$ is a homotopy such that $f_t$ is proper and $h\not \in f_t(\partial U)$ for all $t$, then $\deg(f_0,U,h) = \deg(f_1,U,h)$.
  \item (Excision) If $D\subset U$ is an open set and $f^{-1}(h)\subset D$, then $\deg(f,U,h) = \deg(f,D,h)$.
  \end{enumerate}
\end{prop}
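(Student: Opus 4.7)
The approach is to reduce all three statements to properties of the integral representation of the degree. The key fact is that if $\tilde f\from \overline{U} \to H$ is smooth, $h$ is a regular value of $\tilde f$, and $\omega$ is a smooth top-form on $H$ with $\int_H \omega = 1$ whose support is contained in a ball $B_r(h)$ disjoint from $\tilde f(\partial U)$, then the change-of-variables formula applied to the finite preimage $\tilde f^{-1}(h) \cap U$ yields
\[\deg(\tilde f, U, h) = \int_U \tilde f^* \omega.\]
Since $\deg(f, U, h)$ for arbitrary admissible $(f, U, h)$ is defined by smooth approximation, it suffices to verify each property under the smoothness and regularity assumptions and then check that the outcome is independent of the approximating data.

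For basepoint invariance (1), I would connect $h$ to $h'$ by a smooth path $\gamma$ in $H \setminus f(\partial U)$ and choose a tubular neighborhood $T$ of $\gamma$ still disjoint from $f(\partial U)$. Any two unit-mass bump forms $\omega, \omega'$ supported in $T$ are cohomologous in the compactly supported de~Rham cohomology of $T$, so $\omega - \omega' = \mathrm{d}\eta$ for some $(n-1)$-form $\eta$ compactly supported in $T$; since $\supp f^*\eta$ is disjoint from $\partial U$ and $f$ is proper, Stokes' theorem on $U$ gives $\int_U f^*\omega = \int_U f^*\omega'$, which yields $\deg(f,U,h) = \deg(f,U,h')$. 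When $f\from G\to H$ is proper, $f(\partial G) = \emptyset$, so every $h$ lies in the same ``component.'' For homotopy invariance (2), I would pull $\omega$ back under the homotopy $F\from \overline{U}\times[0,1]\to H$; by compactness and the hypothesis one can shrink the support of $\omega$ so as to avoid $F_t(\partial U)$ for every $t$. Then $F^*\omega$ is closed (as $\omega$ is a top form on $H$, hence closed) and vanishes on a neighborhood of $\partial U \times [0,1]$, so Stokes' theorem on $U \times [0,1]$ gives $\int_U f_1^*\omega - \int_U f_0^*\omega = 0$. For excision (3), I would shrink the support of $\omega$ to a ball so small that $f^{-1}(\supp \omega) \subset D$; this is possible because $f^{-1}(h) \subset D$ is compact, $D$ is open, and $f$ is continuous. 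The integrals over $U$ and over $D$ then coincide, yielding $\deg(f,U,h)=\deg(f,D,h)$.

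The main obstacle is the foundational bookkeeping that makes the integral representation rigorous for non-smooth maps or non-regular values: one must show that the smooth approximations posited in the definition of $\deg(f,U,h)$ always exist, that the outcome is independent of the choice of $(\tilde f, h')$, and that these approximations can be made compatibly with a path $\gamma$, a homotopy $F$, or a shrinking support. None of these steps is deep, but cataloguing them takes space; the full proof is carried out in \cite[Section IV.2]{MappingDegreeTheory} at the pages cited in the statement, and for the purposes of the present paper it suffices to invoke that reference.
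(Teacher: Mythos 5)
The paper does not prove this proposition at all: it is stated as a standard fact of degree theory and attributed to \cite[Propositions 2.3 and 2.4 and Corollary 2.5(3)]{MappingDegreeTheory}, where the properties are established from the regular-value definition \eqref{eqn:DegDef} via Sard's theorem and approximation. Your argument via the integral representation $\deg(\tilde f,U,h)=\int_U \tilde f^*\omega$ for a unit-mass bump form $\omega$ is a correct and equally classical alternative route, and each of your three reductions is sound: the exactness of $\omega-\omega'$ in compactly supported top-degree de~Rham cohomology of a tubular neighborhood of the path (which is diffeomorphic to a ball, so $H^n_c\cong\R$) plus Stokes gives (1); Stokes on $U\times[0,1]$ gives (2); and shrinking $\supp\omega$ until $f^{-1}(\supp\omega)\subset D$, which properness of $f$ and compactness of $f^{-1}(h)$ permit, gives (3). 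What your route buys is a self-contained argument for readers comfortable with differential forms, with orientation signs handled automatically; what it costs is exactly the bookkeeping you flag, plus two points worth making explicit: in (2) you need a \emph{uniform} positive lower bound on $d_H(h,f_t(\partial U))$ over $t\in[0,1]$ (for each fixed $t$ the hypothesis only gives $h\notin f_t(\partial U)$; the uniform bound follows from compactness of $[0,1]$ together with properness of the full homotopy $F$, which is the form in which the hypothesis is actually used), and in (1) the smooth approximation $\tilde f$ must be taken close enough to $f$ that $h$ and $h'$ remain joined by the path $\gamma$ inside $H\setminus\tilde f(\partial U)$. Both points, like the well-definedness of the degree for merely continuous proper maps, are resolved in the cited reference, so deferring them is appropriate here.
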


We also recall the area formula, see, e.g., \cite[Theorem 3.2.3]{Federer1996GeometricTheory}, and \cite[Corollary 4.1.26]{Federer1996GeometricTheory}. Let $U\subset G$ be an open set, and let $f\from \overline{U}\to H$ be a proper Lipschitz map. Then, denoting with $\omega$ the volume form on $H$, we have 
\begin{equation}\label{eq:signed-area}
\int_U f^*\omega = \int_U \det(Df(g))\ud g = \int_{f(U)}\deg(f,U,h)\ud h,
\end{equation}
and
\begin{equation}\label{eq:unsigned-area}
  \int_U |\det(Df(g))| \ud g = \int_{H} \#f^{-1}(h) \ud h \ge \int_{f(U)}|\deg(f,U,h)|\ud h.
\end{equation}

{From now on, in this last section, the identity element of the group $G$ will be denoted with $0$}. Finally, recall that if $G$ is a simply connected nilpotent Lie group, then the rescalings $(G,r^{-1}d_G)$ Gromov--Hausdorff converge, as $r\to\infty$, to a scaling limit $G^\infty$, which is the associated graded nilpotent Lie group equipped with a subRiemannian metric. Furthermore, there is a family of homeomorphisms $s^G_r\from G^\infty\to G, r>0$ with $s^G_r(0)=0$ such that for all $v,v'\in G^{\infty}$, 
$$\lim_{r\to \infty} r^{-1} d_G(s_r(v),s_r(v')) = d_{G^\infty}(v,v'),$$
locally uniformly in $v$ and $v'$, and such that for all $R>0$,
$$
\lim_{r\to \infty} r^{-1}d_{\text{Haus}}(s^G_r(B_R^{G^\infty}), B_{rR}^G) = 0.
$$
In fact, we can write $s^G_r = \exp_G \circ M_r \circ \exp_{G^{\infty}}^{-1}$, where $M_r$ is a family of linear maps from $\mathfrak{g}^\infty$ to $\mathfrak{g}$, see the scheme of the proof in \cite[Theorem 4.7]{LeDonnePrimer}, and the original proof in \cite[Section (38)--(49)]{PansuCroissance}, in particular \cite[Proposition (47)]{PansuCroissance}. 

If $\psi\from G\to H$ is a quasi-isometry such that $\psi(0)=0$ and $s^H_r\from H^\infty\to H, r>0$ is the corresponding family for $H$, then we can define a family of maps $\psi_r\from G^\infty\to H^\infty$,
\begin{equation}\label{eq:psi-rescale}
  \psi_r := (s_r^H)^{-1} \circ \psi \circ s_r^G.
\end{equation}
Then there is a subsequence $(r_i)_i$, with $r_i\to\infty$, such that $\psi_{r_i}$ converges locally uniformly to a bilipschitz homeomorphism $\psi_\infty\from G^\infty \to H^\infty$ as $i\to \infty$.

For $U\subset G$ or $U\subset H$, let $|U|$ be the Riemannian volume of $U$. For $\Lambda = G, H$ and $R>0$, let $B_R^\Lambda\subset \R^n$ be the {ball of radius $R$} centered at $0$ with respect to $d_\Lambda$.

\begin{proof}[Proof of Lemma~\ref{lem:VolumeControl}]
  Without loss of generality, suppose that $\psi(0)=0$.
  For $r>0$, let $\psi_r \from G^\infty \to H^\infty$ as in \eqref{eq:psi-rescale}, and let $(r_i)_i$ be a sequence such that $\psi_{r_i}$ converges locally uniformly to $\psi_\infty\from G^\infty\to H^\infty$, as $r_i\to\infty$.

  Suppose that $\psi_\infty$ is positively oriented, i.e., $\deg(\psi_\infty)=1$; the negatively oriented case is similar. Let $B=B^{G^\infty}_1$ be the unit ball in $G^\infty$. Since $\psi$ is a quasi-isometry, it is proper, and $\psi^{-1}(0)$ is compact. Thus, when $i$ is sufficiently large, $s^G_{r_i}(B)\Supset \psi^{-1}(0)$, and by excision, $\deg(\psi, G, 0) = \deg(\psi, s^G_{r_i}(B), 0)$. Therefore,
  $$
    \deg(\psi) = \lim_{i\to \infty} \deg(\psi, s^G_{r_i}(B), 0)   =\lim_{i\to \infty} \deg(\psi_{r_i}, B, (s_{r_i}^H)^{-1}(0)) 
    = \lim_{i\to \infty} \deg(\psi_{r_i}, B, 0).
  $$
  Since  $\psi_{r_i}$ converges uniformly to $\psi_{\infty}$ on $B$, the homotopy invariance of the degree implies that 
  $$\lim_{i\to \infty} \deg(\psi_{r_i}, B, 0) = \deg(\psi_{\infty}, B, 0) = 1,
  $$
  and thus $\deg(\psi) = \deg(\psi, G, 0) = 1$.
  
  Let $R>4CL$ and let $R' = R - 2CL$. If $x\in B_{R'}^G$ and $y\not\in B_R^G$, then $d_G(x,y)\ge 2CL$, so, since $\psi$ is an $(L,C)$-quasi-isometry, $d_H(\psi(x),\psi(y))>0$, i.e., $\psi(x)\ne \psi(y)$. Let $Z=\psi(B_R^G)$ and $Z'=\psi(B_{R'}^G)$. Then $\psi^{-1}(Z')\subset B_R^G$, so by excision, for all $z\in Z'$, 
  \begin{equation}\label{eq:z-degree}
    \deg(\psi, B_R^G, z) = \deg(\psi,G, z) = 1.
  \end{equation}
  Now we calculate $\tau(R)=\int_{B_R^G} \psi^*\omega \ud g$. By the area formula \eqref{eq:signed-area}, and \eqref{eq:z-degree},
  $$\tau(R) = \int_{Z} \deg(\psi, B_R^G, z) \ud z \stackrel{\eqref{eq:z-degree}}{=} \int_{Z\setminus Z'} \deg(\psi, B_R^G, z) \ud z + |Z'|.$$
  Then
  $$
  \big|\tau(R) - |Z'|\big| \le \int_{Z\setminus Z'} |\deg(\psi, B_R^G, z)| \ud z \le \int_{Z\setminus Z'} \#\left(B_R^G\cap \psi^{-1}(z)\right) \ud z.
  $$
  Let $M := B_R^G\setminus B_{R'}^G$. Then $B_R^G\cap \psi^{-1}(z) \subset M$ for every $z\in Z\setminus Z'$, so, using \eqref{eq:unsigned-area}, 
  $$
  \big|\tau(R) - |Z'|\big| \le \int_{H} \#\left((\psi|_M)^{-1}(z)\right) \ud z = \int_{M} |\det(Df(g))| \ud x \le |M| \Lip(\psi)^n.
  $$
  If $d_G(0,g)>R'$, then $d_H(0,\psi(g)) > L^{-1} R' - C \ge\frac{R}{4L}$, so $\psi^{-1}(B_{\frac{R}{4L}}^H)\subset B_{R'}^G$. Since $\psi$ is {of degree $1$}, it is surjective, so this implies $B_{\frac{R}{4L}}^H\subset \psi(B_{R'}^G)$.
  Thus, 
  $$
  |Z'| = \left|\psi(B_{R'}^G)\right| \ge \left|B_{\frac{R}{4L}}^H\right|.
  $$
  Since $G$ and $H$ are quasi-isometric, they have the same growth exponent $Q$, and there is a $c>0$ depending only on $G, H, L$, and $C$ such that when $R$ is large, $|Z'| \ge c R^Q$. Likewise, there is a $c'>0$ such that when $R$ is large,
  $$
  |M| \Lip(\psi)^n = \Lip(\psi)^n(\left|B_{R}^G\right| - \left|B_{R'}^G\right|) \le c' R^{Q-1}.
  $$
  Thus, when $R$ is sufficiently large,
  $$\tau(R) \ge |Z'| - |M| \Lip(\psi)^n \ge c R^Q - c' R^{Q-1} \ge \frac{c}{2} R^Q,$$
  as desired.
\end{proof}

Finally, we complete the proofs of Theorem~\ref{thm:mainThm} and Theorem \ref{thm:induced-map}.
\begin{proof}[Proof of Theorem \ref{thm:induced-map}]
As proved in Section \ref{sec:Integration}, a direct application of Lemma \ref{lemma:LimitExists}, Lemma \ref{lem:Tsharp}, and Lemma \ref{lem:McommutesCupProduct}, gives that the map $\overline{\psi^*}$ in \eqref{eqn:ContinuousInvariant} exists and induces a homomorphism $\overline{\psi^*}:H^*(\mathfrak{h};\mathbb R)\to H^*(\mathfrak{g};\mathbb R)$ of cohomology algebras over $\mathbb R$.

If $\psi$ is a quasi-isometry, then by Corollary \ref{cor:volcontrol} we have $\overline{\psi^*}([\omega])\neq [0]$, where $[\omega]$ is a generator of the top-dimensional cohomology group. Hence, by Poincaré duality, $\overline{\psi^*}$ is injective. Switching the roles of $\mathfrak{g}$ and $\mathfrak{h}$, there is also an injective algebra homomorphism $H^*(\mathfrak{g};\mathbb R)\to H^*(\mathfrak{h};\mathbb R)$. Hence, $H^*(\mathfrak{g};\mathbb R)$ and $H^*(\mathfrak{h};\mathbb R)$ have the same (finite) dimension, so $\overline{\psi^*}$ is an isomorphism of cohomology algebras over $\mathbb R$, as desired.
\end{proof}

{Before proving Theorem~\ref{thm:mainThm} we recall the center-of-mass convolution studied in \cite{KMX1}. A similar construction was performed by Buser--Karcher in \cite{BK81}. Let us recall some definitions. 

Let $G$ be a step-$m$ simply connected nilpotent Lie group endowed with a left-invariant Riemannian metric which induces a distance $d$. Let $|\cdot|_e$ be the norm induced by the scalar product in the Lie algebra $\mathfrak{g}$. Let $\mathfrak{g}_i$ be the lower central series of $\mathfrak{g}$: i.e., $\mathfrak{g}_0:=\mathfrak{g}$, and $\mathfrak{g}_{i+1}:=[\mathfrak{g},\mathfrak{g}_i]$ for all $i\geq 0$. Notice that $\mathfrak{g}_j=\{0\}$ for all $j\geq m$. For $i\geq 0$, let $W_i$ be vector subspaces of $\mathfrak{g}$ such that $\mathfrak{g}_i=W_i\oplus \mathfrak{g}_{i+1}$. We have $\mathfrak{g}=\oplus_{i=0}^{m-1}W_i$, and let us define $\pi_i$ as the projection from $\mathfrak{g}$ to $W_i$. Let us denote
\[
\|v\|:=\sum_{i=0}^{m-1}|\pi_j(v)|_e^{1/(j+1)}, \quad \forall v\in\mathfrak{g}.
\]

Let $\log:G\to\mathfrak{g}$ be the inverse of the exponential map. Notice that for $v\in \log(B_R(0))$ we have
\begin{equation}\label{eqn:Estimates}
\|v\|^m\lesssim |v|_e\lesssim \|v\|, \qquad |v|_e \lesssim d(0,\exp(v))\lesssim |v|_e,
\end{equation}
where the constants in the estimates only depend on the Riemannian group $G$ and $R$. We will say that a Borel probability measure $\xi$ on $G$ has \textit{finite $p$-th moment}, with $p\in\mathbb N$, if 
\[
\int_{\mathfrak{g}}\|v\|^p\mathrm{d}(\log_*(\xi))(v) < +\infty.
\]
\begin{theorem}[{\cite[Theorem 3.4, Remark 3.18]{KMX1}}]\label{thm:COM}
    Let $G$ be a step-$m$ simply connected nilpotent Lie group. Let $\nu$ be a Borel probability measure on $G$ with finite $m$-th moment.
    
    Then, for every $x\in G$ the map $\log_x:=\log\circ l_{x^{-1}}$ defined from $G$ to $\mathfrak{g}$ is $\nu$-integrable. Moreover $C_\nu:G\to\mathfrak{g}$ defined as
    \[
    C_\nu(x):=\int_G \log_x\mathrm{d}\nu,
    \]
    is a diffeomorphism. In addition, there exists a $\mathfrak{g}$-valued polynomial $Q$ with $Q(0,\ldots,0)=0$, and $\mathfrak{g}$-valued multilinear forms $L_1,\ldots,L_K$ such that 
    \begin{equation}\label{eqn:DefnCnu}
    \log\left(C_\nu^{-1}(0)\right)=Q(A_1,\ldots,A_K),
    \end{equation}
    where 
    \begin{equation}\label{eqn:Ai}
    A_i:=\int_{\mathfrak{g}}L_i(v,\ldots,v)\mathrm{d}(\log_*\nu)(v), \quad \forall 1\leq i\leq K,
    \end{equation}
    and 
    \begin{equation}\label{eqn:EstimateLi}
    |L_i(v,\ldots,v)|_e\lesssim 1+\|v\|^m, \quad \forall 1\leq i \leq K.
    \end{equation}
\end{theorem}
\begin{defn}[Center of mass]
    Let $G$ be a step-$m$ simply connected nilpotent Lie group, and let $\nu$ be a Borel probability measure on $G$ with finite $m$-th moment. By using Theorem \ref{thm:COM}, we define the \textit{center of mass} of $\nu$ as follows:
    \[
    \mathrm{com}(\nu):=(C_\nu)^{-1}(0).
    \]
\end{defn}
Let $G$ be a step-$m$ simply connected nilpotent Lie group, and let $\nu$ be a Borel probability measure on $G$ with finite $m$-th moment. Let us denote $I:G\to G$ the inversion map defined by $I(x):=x^{-1}$ for every $x\in G$. By using \cite[Lemma 3.21, Remark 3.26]{KMX1}, we recall that 
\begin{equation}\label{eqn:InvarianceCOM}
\mathrm{com}(I_*\nu)=I(\mathrm{com}(\nu)), \qquad \mathrm{com}((l_x)_*\nu)=l_x(\mathrm{com}(\nu)),\quad \forall x\in G.
\end{equation}
{By a compactness argument applied to Theorem~\ref{thm:COM}, for every $r>0$ there is an $R>0$ (depending on $G$) such that if $\nu$ is supported on the ball $B^G_r(0)$, then $\mathrm{com}(\nu) \in B^G_R(0)$. The second part of \eqref{eqn:InvarianceCOM} then implies that for any $x\in G$, if $\nu$ is supported on $B^G_r(x)$, then $\mathrm{com}(\nu) \in B^G_R(x)$.}

We are now ready to define the mollification. In order not to complicate our discussion, we will stick to mollifying continuous functions, even though, with the same procedure, it is possible to mollify more general functions.
\begin{defn}[Mollification]\label{def:Mollification}
    Let $G,H$ be simply connected nilpotent Lie groups endowed with left-invariant Riemannian metrics. Let $\sigma$ be a probability measure on $G$ such that: it has smooth density with respect to a Haar measure $\mu$ on $G$; it is supported on the ball of center $0$ and radius $1$; it satisfies $I_*\sigma=\sigma$, where $I$ is the inversion map defined above.

    Let $f:G\to H$ be a continuous function. Then, by \eqref{eqn:Estimates} and since $f$ is continuous, it is straightforward to check that for every $x\in G$, $(f\circ l_x)_*\sigma$ has finite $m$-th moment. Thus, by using Theorem \ref{thm:COM}, we can define the \textit{mollification} $f_1:G\to H$ as follows:
    \[
    f_1(x):=\mathrm{com}\left((f\circ l_x)_*\sigma\right), \quad \forall x\in G.
    \]
\end{defn}
Notice that by \eqref{eqn:InvarianceCOM} (arguing verbatim as in \cite[Lemma 3.33(3)]{KMX1}) we have that if $f:G\to H$ is a continuous map and $x\in G$, $y\in H$, then
\begin{equation}\label{eqn:InvarianceMollification}
(l_y\circ f\circ l_x)_1=l_y\circ f_1\circ l_x.
\end{equation}
We are now ready to prove Theorem 1.3.
}

\begin{proof}[Proof of Theorem~\ref{thm:mainThm}]
  Suppose that $G$ and $H$ are quasi-isometric simply connected nilpotent Lie groups. By \cite{BoissonatDyerGhosh}, $G$ admits a triangulation with bounded geometry, so if $f\from G\to H$ is a quasi-isometry, there is a continuous quasi-isometry $f'\from G\to H$ which agrees with $f$ on the vertices of the triangulation. (One can also construct $f$ using the nerve of a collection of points in $G$, see for instance Chapter~3.8 of \cite{DrutuKapovich}.)

  {Let $\phi:G\to H$ be the mollification of $f'$ with respect to a measure $\sigma$ supported in the unit ball $B_1^G$, as in Definition \ref{def:Mollification}, i.e., $\phi(x):= \mathrm{com}((f'\circ l_x)_*\sigma)$. We claim that $\phi\in Y_0$ is a quasi-isometry, i.e., $\phi$ is a quasi-isometry with bounded derivatives. 

  We first show that there is a $C>0$ such that $d(\phi(x),f'(x))\leq C$ for every $x\in G$. Since
$f'$ is a quasi-isometry, there is an $r>0$ such that 
  $$\supp((f'\circ l_x)_*\sigma)\subset f'(B^G_1(x))\subset B^H_r(f'(x))$$
  for all $x$. Therefore, by the remark after \eqref{eqn:InvarianceCOM}, there is a $C>0$ such that $\phi(x)\in B^H_C(f'(x))$ for all $x\in G$. Since $f'$ is a quasi-isometry, $\phi$ is a quasi-isometry as well.
  
  We are left to show that $\phi$ has bounded derivatives. By \eqref{eqn:InvarianceMollification} it is enough to assume $\phi(0)=0$, and to show that the map $\widetilde\phi:=\log_H\circ \phi\circ \exp_G$ is smooth and has uniformly bounded derivatives at $0$. As in the proof of \cite[Lemma 3.33(7)]{KMX1}, Theorem~\ref{thm:COM} implies that $\widetilde{\phi}(x) = \log_H \mathrm{com}((f' \circ l_{\exp_G(x)})_*(\sigma))$ can be written as a smooth function of the integrals
  $$\widetilde{A}_i(x) = \int_{\mathfrak{g}} L_i(v,\dots,v) \ud(\log \circ f' \circ l_{\exp_G(x)})_*(\sigma)$$
  for $i=1,\dots, K$, where each $L_i$ is a multilinear form.
  Since $\mu$ is compactly supported, these integrals are smooth functions of $x$. Since $\supp (f')_*(\sigma) \subset B_c^H(f'(0))\subset B_{c+C}^H(\phi(0))$, when $x$ is small, each $\widetilde{A}_i$ and its derivatives are uniformly bounded by constants only depending on $G,H,k$, the smoothing kernel $\sigma$, and the radii $c$ and $C$ above. Therefore, the derivatives of $\widetilde{\phi}$ are likewise bounded by constants only depending on $G,H,k,\sigma, c$, and $C$, as desired.}

  Finally, by Proposition \ref{prop:ExistenceOfErgodic} there exists an ergodic quasi-isometry $\psi\in Y_0$. By Theorem \ref{thm:induced-map}, the pullback $\overline{\psi^*}$ induces an isomorphism of cohomology algebras $H^*(\mathfrak{h};\mathbb R)\cong H^*(\mathfrak{g};\mathbb R)$ as desired.
\end{proof}

\printbibliography[title={References}]

\end{document}